\newtheorem{theorem}{Theorem}[section]
\newtheorem{lemma}{Lemma}[section]
\begin{document}
\title{Universal curvature identities}
\author{P. Gilkey, J.H. Park and K. Sekigawa}
\address{PG: Department of Mathematics, University of Oregon, Eugene OR 97403 USA\\ E-mail: gilkey@uoregon.edu}
\address{JHP: Department of Mathematics, Sungkyunkwan University, Suwon 440-746, Korea.\\
 E-mail: parkj@skku.edu}
\address{KS: Department of Mathematics, Faculty of Science, Niigata University, Niigata, 950-2181, Japan.
E-mail: sekigawa@math.sc.niigata-u.ac.jp}
\begin{abstract}{We study scalar and symmetric 2-form valued universal curvature identities. We use this to establish the
Gauss-Bonnet theorem using heat equation methods, to give a new proof of a result of Kuz'mina and Labbi concerning the
Euler-Lagrange equations of the Gauss-Bonnet integral, and to give a new derivation of the Euh-Park-Sekigawa identity.
\\MSC 2010: 53B20, 58G25.
\\Keywords: Pfaffian, Gauss-Bonnet, Euler-Lagrange Equations, Euh-Park-Sekigawa identity}\end{abstract}

\maketitle

\section{Introduction and outline of paper}
The study of Riemannian geometry relies to a large extent on the examination of curvature and of local curvature invariants of
the manifold both for their own sake but also in relationship to other structures (see, for example,
\cite{B10, B9, BGKNW9, EPS9,
K10,Ko10, N10, O9}) - this paper follows in that line of investigation.

\subsection{Scalar invariants of the metric} Let $\mathcal{I}_{m,n}$ be the space of
scalar invariant local formulas which are homogeneous of
order $n$ in the derivatives of the metric and which are defined in
the category of all Riemannian manifolds of dimension $m$; we refer
to Section \ref{sect-2} for details. Since $\mathcal{I}_{m,n}=\{0\}$
if $n$ is odd, we shall assume $n$ even henceforth. Such invariants
are given by contracting indices in monomials involving the
covariant derivatives of the curvature tensor. Let $R_{ijkl}$ be the
components of the curvature tensor relative to a local orthonormal
frame $\{e_1,...,e_m\}$ for the tangent bundle of $M$. For example,
the scalar curvature may be defined by setting:
$$\tau_m:=\sum_{i,j=1}^mR_{ijji}\in\mathcal{I}_{m,2}\,.$$
There is a natural restriction map $r:\mathcal{I}_{m,n}\rightarrow\mathcal{I}_{m-1,n}$ given by restricting the summation to
range from $1$ to $m-1$ that will be discussed in Section \ref{sect-2}. For example, we have that $r(\tau_m)=\tau_{m-1}$. Thus
the scalar curvature is {\it universal} and for that reason it is not usually subscripted in this fashion. More generally, we
have (see, for example, the discussion in \cite{G94}) the following universal spanning sets for $n=0,2,4,6$; we shall suppress
the role of the dimension $m$ to simplify the notation and we shall adopt the {\it Einstein convention} and sum over repeated
indices.  Let $\rho$ be the Ricci tensor and let $R$ be the full curvature
tensor.
\goodbreak\begin{lemma}\label{lem-1.1}
\ \begin{enumerate}
\item
$\mathcal{I}_{m,0}=\operatorname{Span}\left\{1\right\}$.
\smallbreak\item
$\mathcal{I}_{m,2}=\operatorname{Span}\left\{\tau:=R_{ijji}\right\}$.
\smallbreak\item
$\mathcal{I}_{m,4}=\operatorname{Span}\left\{\Delta\tau:=-R_{ijji;kk},\tau^2:=R_{ijji}R_{kllk},
       |\rho|^2:=R_{ijjk}R_{illk}\right.$,
\smallbreak $\qquad\qquad\left.|R|^2:=R_{ijkl}R_{ijkl}\right\}$.
\smallbreak\item
$\mathcal{I}_{m,6}=\operatorname{Span}\left\{R_{ijji;kkll},R_{ijji;k}R_{lnnl;k},R_{aija;k}R_{bijb;k}\right.$,
$R_{ajka;n}R_{bjnb;k},$\smallbreak
$R_{ijkl;n}R_{ijkl;n},R_{ijji}R_{kllk;nn},R_{ajka}R_{bjkb;nn},R_{ajka}R_{bjnb;kn}$,
\smallbreak
$R_{ijkl}R_{ijkl;nn},R_{ijji}R_{kllk}R_{abba},R_{ijji}R_{ajka}R_{bjkb},
R_{ijji}R_{abcd}R_{abcd}$,
\smallbreak
$R_{ajka}R_{bjnb}R_{cknc},R_{aija}R_{bklb}R_{ikjl},
R_{ajka}R_{jnli}R_{knli},R_{ijkn}R_{ijlp}R_{knlp}$,
\smallbreak
$\left.R_{ijkn}R_{ilkp}R_{jlnp}\right\}$.
\end{enumerate}\end{lemma}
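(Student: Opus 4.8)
The plan is to combine H.~Weyl's first fundamental theorem of invariant theory for the orthogonal group with an explicit combinatorial enumeration. First I would recall the standard reduction (see the discussion in \cite{G94}): an element of $\mathcal{I}_{m,n}$ is by definition a polynomial in the jets of the metric invariant under coordinate change; passing to geodesic normal coordinates and then to a local orthonormal frame converts such an invariant into an $O(m)$-invariant polynomial in the components of $R,\nabla R,\nabla^2R,\dots$, homogeneous of weight $n$, where $\nabla^kR$ is assigned weight $k+2$. By Weyl's theorem every $O(m)$-invariant polynomial in a collection of tensors is a linear combination of complete contractions (pairings of all indices against the metric) of monomials built from those tensors. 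Hence $\mathcal{I}_{m,n}$ is spanned by complete contractions of monomials $\nabla^{k_1}R\otimes\cdots\otimes\nabla^{k_s}R$ with $\sum_i(k_i+2)=n$. Since Weyl's description is uniform in $m$ (for $m$ not too small, the remaining low-dimensional cases being handled by the surjectivity of the restriction map $r$ of Section \ref{sect-2}), the resulting spanning set is universal.

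Next I would enumerate the monomial types permitted by the weight constraint. For $n=0$ the only monomial is the empty product, giving $\operatorname{Span}\{1\}$. For $n=2$ the only type is a single factor of $R$, and every complete contraction of $R_{ijkl}$ reduces, using the symmetries $R_{ijkl}=-R_{jikl}=-R_{ijlk}=R_{klij}$ together with the first Bianchi identity, to $\pm\tau$. For $n=4$ there are two types, $\nabla^2R$ and $R\otimes R$; for $n=6$ there are four types: $\nabla^4R$, $\nabla^2R\otimes R$, $\nabla R\otimes\nabla R$, and $R\otimes R\otimes R$. Within each type I would write down the a priori complete contractions and then pare the list down using four tools: (i) the algebraic symmetries of $R$; (ii) the first Bianchi identity; (iii) the second Bianchi identity $R_{ijkl;n}+R_{ijln;k}+R_{ijnk;l}=0$, whose contracted form trades certain terms for Ricci-type ones; and (iv) the Ricci commutation identity for interchanging covariant derivatives, which alters a contraction only by terms of the same weight with strictly fewer derivatives, hence may be used to bring a monomial into a normal form. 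Carrying this out for $n=4$ yields $\{\Delta\tau,\tau^2,|\rho|^2,|R|^2\}$ and for $n=6$ yields the displayed seventeen-term list.

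The one genuinely laborious step is the $n=6$ case: the raw list of complete contractions, especially for the cubic type $R\otimes R\otimes R$ and for $\nabla R\otimes\nabla R$, is long, and one must apply the Bianchi identities repeatedly to collapse it. Keeping track of which contractions actually survive the reduction—recognizing, for instance, that $R_{ijkn}R_{ilkp}R_{jlnp}$ is genuinely required while many superficially distinct cubic contractions are not—is the part that demands care. I would stress that only a spanning statement is asserted, so no linear-independence argument is needed; the entire content is the Weyl reduction plus the combinatorial bookkeeping, both standard, which is why the statement is quoted from the literature rather than reproved here in full detail.
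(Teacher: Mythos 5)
Your proposal is correct and follows essentially the same route as the paper: the authors derive this spanning result from Lemma \ref{lem-2.2} (the consequence of H.~Weyl's first theorem in geodesic normal coordinates, reducing everything to complete contractions of covariant derivatives of the curvature tensor) and then, as you do, invoke the curvature symmetries and Bianchi identities to eliminate redundancies, omitting the combinatorial details. Your write-up simply makes explicit the enumeration of monomial types and the reduction tools that the paper leaves implicit.
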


Lemma \ref{lem-1.1} follows from Lemma \ref{lem-2.2} (see Section
\ref{sect-2}) with a bit of work; we shall omit details as we shall
not need Lemma \ref{lem-1.1} in what follows and simply present it
for the purposes of illustration. The universal scalar invariants
given in Lemma \ref{lem-1.1} are linearly independent if $m\ge n$.
However, they are not linearly independent if $m=n-1$ and there is a
single additional universal relation amongst these invariants that
we may describe as follows. Define the {\it Pfaffian}
$E_{m,n}\in\mathcal{I}_{m,n}$ for $n$ even by setting:
$$E_{m,n}:=\sum_{i_1,...,i_n,j_1,...,j_n=1}^m{{R_{i_1i_2j_2j_1}...R_{i_{n-1}i_{n}j_nj_{n-1}}}}g(e^{i_1}\wedge...\wedge
e^{i_n},e^{j_1}\wedge...\wedge e^{j_n})\,.$$
For example,
$E_{m,2}=2\tau_m$ is essentially just the scalar curvature. The
invariants $E_{m,n}$ are again {\it universal}, i.e.
$$E_{m,n}\in\mathcal{I}_{m,n}\quad\text{and}\quad r(E_{m,n})=E_{m-1,n}\,.$$
It is also immediate that $r(E_{m,m})=0$ since
$e^{i_1}\wedge...\wedge e^{i_m}$ vanishes on a manifold of dimension
$m-1$. Consequently,
$E_{m,m}\in\ker(r:\mathcal{I}_{m,m}\rightarrow\mathcal{I}_{{m-1,m}})$
and $E_{m,m}$ provides a universal relation in curvature. Expressing
the invariants $E_{m,2}$, $E_{m,4}$, and $E_{m,6}$ universally in
terms of contractions of indices (see, for example, the discussion
in \cite{P88}) then yields the following relations:
\goodbreak\begin{lemma}\label{lem-1.2}
\ \begin{enumerate}
\item If $m=1$, then $0=R_{ijji}$.
\smallbreak\item If $m=3$, then $0=R_{ijji}R_{kllk}-4R_{aija}R_{bijb}+R_{ijkl}R_{ijkl}$.
\smallbreak\item If $m=5$, then $0=R_{ijji}R_{kllk}R_{abba}-12R_{ijji}R_{aija}R_{bijb}+3R_{abba}R_{ijkl}R_{ijkl}$
\smallbreak\qquad\quad
$+24R_{aija}R_{bklb}R_{jlik}+16R_{aija}R_{bjkb}R_{cikc}-24R_{aija}R_{jkln}R_{lnik}$
\smallbreak\qquad\quad
$+2R_{ijkl}R_{klan}R_{anij}-8R_{kaij}R_{inkl}R_{jlan}$.
\end{enumerate}\end{lemma}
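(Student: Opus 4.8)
The plan is to derive each identity by expressing the Pfaffian $E_{n,n}$ as a universal scalar curvature invariant and then restricting to dimension $n-1$. Fix $n$ even. Because $E_{m,n}$ is a polynomial of degree $n/2$ in the curvature with no covariant derivatives, it is a contraction of a product of $n/2$ copies of the Riemann tensor, and hence --- using the spanning sets of Lemma \ref{lem-1.1} for $n\le 6$ --- it equals a fixed linear combination, with universal coefficients, of the derivative-free monomials listed there. To identify those coefficients I would expand the generalized Kronecker symbol $g(e^{i_1}\wedge\cdots\wedge e^{i_n},e^{j_1}\wedge\cdots\wedge e^{j_n})=\det(\delta^{i_a}_{j_b})$ as a signed sum over permutations, substitute it into the defining formula for $E_{m,n}$, and collapse the result using the curvature symmetries $R_{ijkl}=-R_{jikl}=-R_{ijlk}=R_{klij}$ together with the first Bianchi identity. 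For $n=2$ this gives $E_{m,2}=2R_{ijji}$; for $n=4$ it gives that $E_{m,4}$ is a nonzero multiple of $R_{ijji}R_{kllk}-4R_{aija}R_{bijb}+R_{ijkl}R_{ijkl}$, the Chern--Gauss--Bonnet integrand in dimension four; and for $n=6$ it gives that $E_{m,6}$ is, up to an overall nonzero scalar, the linear combination of the eight cubic algebraic invariants of Lemma \ref{lem-1.1}(4) displayed in assertion (3), after the obvious relabeling of indices and use of the curvature symmetries. This computation may instead simply be quoted from \cite{P88}.

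Granting these expansions, the identities follow at once. As observed in the text, $e^{i_1}\wedge\cdots\wedge e^{i_n}$ vanishes identically on any manifold of dimension $n-1$, so $r(E_{n,n})=0$ in $\mathcal{I}_{n-1,n}$. On the other hand, the restriction map $r$ acts on a universal invariant by letting each summation index run over $1,\dots,m-1$ in place of $1,\dots,m$; applying it to the expression for $E_{n,n}$ just obtained therefore produces exactly the curvature polynomial in the statement, with every index now summed from $1$ to $n-1$. Equating these two descriptions of $r(E_{n,n})$ and dividing out the overall scalar yields the claimed identity on $(n-1)$-dimensional manifolds: the case $n=2$ gives (1), $n=4$ gives (2), and $n=6$ gives (3).

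The hard part is the first step, and within it the case $n=6$: the raw permutation expansion contributes $6!=720$ terms, and the bookkeeping needed to reduce it to the eight-term normal form --- keeping track of signs and recognizing which contracted monomials coincide under the curvature symmetries --- is where the care lies. The cases $n=2$ and $n=4$ are short and classical. Alternatively, once one knows that $E_{m,6}$ lies in the span of the eight cubic invariants, the coefficients can be pinned down by evaluating both sides on a small family of test metrics, such as products of round spheres of various dimensions and radii; this replaces the symbolic expansion by a finite linear algebra problem. With either approach in hand, the restriction argument of the second paragraph completes the proof.
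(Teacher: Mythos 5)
Your proposal is correct and follows essentially the same route as the paper, which likewise obtains these identities by expanding the Pfaffian $E_{n,n}$ as a universal contraction of curvature monomials (citing \cite{P88} for the computation) and then using $r(E_{n,n})=0$ in dimension $n-1$. The only difference is that you spell out the permutation expansion and offer a test-metric alternative for fixing the coefficients, details the paper leaves to the reference.
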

 In fact, these the only such universal relations of this type \cite{G73}:

\goodbreak\begin{theorem}\label{thm-1.1}
\
\begin{enumerate}
\item  $r:\mathcal{I}_{m,n}\rightarrow\mathcal{I}_{m-1,n}$ is always
surjective.
\smallbreak\item If $n$ is even and if $m>n$, then $r:\mathcal{I}_{m,n}\rightarrow\mathcal{I}_{m-1,n}$ is bijective.
\smallbreak\item Let $m$ be even. Then $\ker\{r:\mathcal{I}_{m,m}\rightarrow\mathcal{I}_{m-1,m}\}=E_{m,m}\cdot\mathbb{R}$.
\end{enumerate}
\end{theorem}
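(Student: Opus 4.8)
The plan is to reduce everything to H.~Weyl's two fundamental theorems for the orthogonal group. By Lemma~\ref{lem-2.2} (H.~Weyl's first fundamental theorem), $\mathcal{I}_{m,n}$ is spanned by complete contractions of monomials $\nabla^{k_1}R\otimes\cdots\otimes\nabla^{k_p}R$; for order $n$ one needs $\sum_j(k_j+2)=n$, so such a monomial has $\sum_j(k_j+4)=n+2p$ index slots, paired into $n/2+p$ contractions, and since $k_j+2\ge 2$ forces $p\le n/2$, it involves at most $n$ distinct frame indices. Fix such a spanning set $\{A_\alpha\}$ of $\mathcal{I}_{m,n}$, chosen so as to incorporate the symmetries and first Bianchi identity of $R$. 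The combinatorial datum defining $A_\alpha$ (which index slots are contracted with which) makes no reference to $m$, so it defines $A_\alpha^{(d)}\in\mathcal{I}_{d,n}$ for every $d$, with $r(A_\alpha^{(d)})=A_\alpha^{(d-1)}$.

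\emph{Proof of (1).} Given $\phi\in\mathcal{I}_{m-1,n}$, write $\phi=\sum_\alpha c_\alpha A_\alpha^{(m-1)}$ and set $\tilde\phi=\sum_\alpha c_\alpha A_\alpha^{(m)}$; then $\tilde\phi\in\mathcal{I}_{m,n}$ with $r(\tilde\phi)=\phi$, so $r$ is onto.

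\emph{Proof of (2) and (3).} Since $r$ carries $\{A_\alpha^{(m)}\}$ onto $\{A_\alpha^{(m-1)}\}$, the kernel of $r$ is identified with the space of linear relations $\sum_\alpha c_\alpha A_\alpha^{(m-1)}=0$ valid in dimension $m-1$, modulo those $\sum_\alpha c_\alpha A_\alpha^{(m)}=0$ valid in dimension $m$. By H.~Weyl's second fundamental theorem, relations among complete contractions of curvature in dimension $d$ are generated by total antisymmetrizations over $d+1$ of the index slots of a monomial, and such a relation, being homogeneous, occurs only in orders $\ge d+1$. Hence if $m>n$ there are no order-$n$ relations in dimension $m$ or $m-1$, both $\{A_\alpha^{(m)}\}$ and $\{A_\alpha^{(m-1)}\}$ are bases, and $r$ maps a basis bijectively to a basis; this gives (2). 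If $m$ is even and $n=m$, then $\{A_\alpha^{(m)}\}$ is still a basis of $\mathcal{I}_{m,m}$, and $\ker r$ is exactly the space of order-$m$ relations in dimension $m-1$, i.e.\ of antisymmetrizations over $m$ slots of order-$m$ curvature monomials. We already know $E_{m,m}\in\ker r$, and $E_{m,m}\ne 0$ in $\mathcal{I}_{m,m}$ since it does not vanish on the round $m$-sphere, so it remains only to show $\dim\ker r\le 1$.

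For that: in an antisymmetrization over $m$ slots of an order-$m$ monomial, whenever three of the chosen slots lie in a single factor $\nabla^{k_j}R$ they can be eliminated---modulo curvature monomials with strictly more factors---using the first Bianchi identity, the second Bianchi identity, and the commutation rule for covariant derivatives. Iterating (the number of factors is bounded by $m/2$) reduces every such relation to a combination of antisymmetrizations over $m$ slots, with exactly two slots per factor, of products of exactly $m/2$ \emph{undifferentiated} curvature tensors. A finite computation with the first Bianchi identity then shows this space is one-dimensional and is spanned by the relation defining $E_{m,m}$ (cf.\ \cite{P88}); hence $\ker r=E_{m,m}\cdot\mathbb{R}$. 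The main obstacle is precisely this last reduction---controlling, via the Bianchi and commutator identities, all of the ``excess'' antisymmetrization relations and showing they collapse onto the single Pfaffian relation. Parts (1), (2), and the inclusion $E_{m,m}\cdot\mathbb{R}\subseteq\ker r$ are, by comparison, formal consequences of the dimension-independent structure of the Weyl contractions.
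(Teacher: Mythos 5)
Your treatment of surjectivity coincides with the paper's (Lemma \ref{lem-2.3}): the spanning set of curvature contractions is dimension-independent and lifts by enlarging the range of summation. The difficulty lies in parts (2) and (3), and there your argument has a genuine gap. You invoke Weyl's second theorem to assert that \emph{all} relations among complete contractions of curvature valid in dimension $d$ are generated by antisymmetrizations over $d+1$ index slots. But Theorem \ref{thm-2.2} classifies relations among orthogonal invariants of \emph{arbitrary} tensors; the tuples $(R,\nabla R,\dots)$ realized at a point of a $d$-dimensional manifold sweep out only a subvariety of the relevant tensor space, and a ``relation valid in dimension $d$'' is any polynomial vanishing on that subvariety. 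To conclude that these are generated by the universal identities (symmetries, the Bianchi identities, the Ricci commutation rule) together with the Gram-determinant relations, you would need to control the full ideal of that subvariety --- which is essentially the theorem being proved. The paper flags exactly this pitfall in the remark following Lemma \ref{lem-2.4}: the analogue of Lemma \ref{lem-2.4} is \emph{false} for the curvature tensor, there being hidden, dimension-specific relations. Note that this undercuts your part (2) as well, since the claim that $\{A_\alpha^{(m-1)}\}$ is a basis for $m>n$ rests on the same unjustified classification of relations.

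The paper circumvents all of this by passing to the jets of the metric in normal coordinates, where Lemma \ref{lem-2.4} guarantees there are \emph{no} hidden relations. The counting argument of Lemma \ref{lem-2.5} (every index occurs an even, hence $\ge 2$, number of times in each monomial of an element of $\ker r$, forcing $2m\le 2\ell+n\le 2n$) proves (2) outright and shows that for $m=n$ an element of $\ker r$ is a polynomial purely in the second jets $g_{ij/kl}$ with each index occurring exactly twice. Since the $g_{ij/kl}$ genuinely range over all of $S^2\otimes S^2$, Theorem \ref{thm-2.2} then applies legitimately, and the short symmetric-versus-antisymmetric argument of Section \ref{sect-2.6} pins the kernel down to one dimension. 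Your proposed substitute for this step --- eliminating, via the Bianchi identities and the commutator rule, every antisymmetrization with three or more slots in a single factor $\nabla^{k}R$, and then a ``finite computation'' --- is asserted rather than proved, and you yourself identify it as the main obstacle; it is precisely the combinatorial work that the jet-counting argument renders unnecessary. (Indeed, for $n=m$ a slot count already forces all factors to be undifferentiated with every slot in the antisymmetrization, so the reduction you describe is not the real issue; the real issue is justifying the classification of relations in the first place.)
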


\subsection{Heat trace asymptotics}
Theorem \ref{thm-1.1} was originally established to provide a heat equation proof of the Gauss-Bonnet Theorem \cite{G73}.
We sketch the derivation to illustrate the use of Theorem \ref{thm-1.1}. Let $(M,g)$
be a compact Riemannian manifold. Let
$\Delta_p$ be the Laplacian on
$p$-forms. The fundamental solution of the heat equation $e^{-t\Delta_p}$ is of trace class. If $f\in C^\infty(M)$, then there
is a complete asymptotic series as
$t\downarrow0$ of the form
$$\operatorname{Tr}_{L^2}(fe^{-t\Delta_p})\sim\sum_{n=0}^\infty t^{(n-m)/2}\int_Mf(x)a_{m,n,p}(x,\Delta_p)d\nu$$
where $a_{m,n,p}\in\mathcal{I}_{m,n}$ is a local invariant which is
homogeneous of order $n$ in the jets of the metric and where $d\nu$ is the {Riemannian measure}:
$$d\nu=gdx^1...dx^m\quad\text{where}\quad
g=\sqrt{\det(g_{ij})}\quad\text{and}\quad
g_{ij}=g(\partial_{x^i},\partial_{x^j})\,.$$Note that $a_{m,n,p}=0$
if $n$ is odd. We take the super trace and set
$$a_{m,n}:=\sum_{p=0}^m(-1)^pa_{m,n,p}\in\mathcal{I}_{m,n}\,.$$
The cancellation argument of Bott \cite{AB67}
shows that we have a local formula for the Euler-Poincar\'e characteristic:
$$\chi(M)=\int_Ma_{m,m}(x)d\nu\,.$$
It also follows using suitable product formulas that $r(a_{m,n})=0$ for any $(m,n)$. Let $m$ be even ($\chi(M)=0$ if $m$ is
odd). Theorem \ref{thm-1.1} implies that there is a universal constant $c_m$ so that
\medbreak\noindent\hfill
$a_{m,m}=\left\{\begin{array}{lll}0&\text{if}&n<m\\
c_mE_{m,m}&\text{if}&n=m\end{array}\right\}\quad\text{and thus}\quad
\displaystyle\chi(M)=\int_Mc_mE_{m,m}$.\hfill\vphantom{.}
\medbreak\noindent
The constant is easily determined by evaluation on the manifold $S^2\times...\times S^2$ and the Gauss-Bonnet formula results.
We remark in passing that it is possible to examine $\ker(r:\mathcal{I}_{m,m+2}\rightarrow\mathcal{I}_{m-1,m+2})$ and
thereby evaluate the next term in the heat expansion $a_{m,m+2}$ \cite{G79}.

\subsection{Symmetric 2-tensor valued invariants} Let $\mathcal{I}_{m,n}^2$ be the space of symmetric $2$-form valued
invariants which are homogeneous of degre $n$ in the derivatives of the metric and which are defined in the category of $m$
dimensional Riemannian manifolds; again we refer to Section \ref{sect-2} for further details. Let $\{e_1,...,e_k\}$ be a local
orthonormal frame for the tangent bundle of $M$. If
$\xi$ and
$\eta$ are cotangent vectors, then the symmetric product is denoted by
$\xi\circ\eta:=\frac12\{\xi\otimes\eta+\eta\otimes\xi\}$. For example, $g=e^k\circ e^k$.
One has:

\goodbreak\begin{lemma}\label{lem-1.3}
\ \begin{enumerate}
\item $\mathcal{I}_{m,0}^2=\operatorname{Span}\left\{e^k\circ e^k\right\}$.
\smallbreak\item
$\mathcal{I}_{m,2}^2=\operatorname{Span}\left\{R_{ijji}e^k\circ
e^k,R_{ijki}e^j\circ e^k\right\}$. \smallbreak\item
$\mathcal{I}_{m,4}^2=\operatorname{Span}\left\{R_{ijji;kk}e^l\circ
e^l\right.$, $R_{kjjl;ii}e^k\circ e^l$, $R_{ijji;kl}e^k\circ e^l$,
\smallbreak\quad
$R_{ijji}R_{kllk}e^n\circ e^n$,
$R_{ijki}R_{ljkl}e^n\circ e^n$,
$R_{ijkl}R_{ijkl}e^n\circ e^n$, $R_{ijji}R_{klnk}e^l\circ e^n$,
\smallbreak\quad
$R_{ikli}R_{jknj}e^l\circ e^n$,
$R_{ijkl}R_{ijkn}e^l\circ e^n$, $\left.R_{lijn}R_{kijk}e^l\circ e^n\right\}$.
\end{enumerate}
\end{lemma}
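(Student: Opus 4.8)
The plan is to deduce Lemma~\ref{lem-1.3} from the structural description of natural invariants --- the symmetric $2$-tensor valued version of Lemma~\ref{lem-2.2} --- in the same way that Lemma~\ref{lem-1.1} is deduced from Lemma~\ref{lem-2.2} in the scalar case. The point of departure is H.~Weyl's theorem on the invariants of the orthogonal group: every $T\in\mathcal{I}_{m,n}^2$ is a linear combination of terms obtained by forming a tensor product $\nabla^{k_1}R\otimes\cdots\otimes\nabla^{k_r}R$ of iterated covariant derivatives of the curvature tensor, contracting pairs of indices against the metric, leaving exactly two indices free, and symmetrising them; the metric $g=e^k\circ e^k$ itself is allowed as a factor, which is just the special case in which both surviving indices arise from a contraction (so that $T=A\cdot g$ for a scalar $A\in\mathcal{I}_{m,n}$). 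Since $\nabla^kR$ is homogeneous of weight $k+2$ in the jets of $g$, a term of total weight $n$ satisfies $\sum_i(k_i+2)=n$; in particular there are at most $n/2$ curvature factors and each $k_i\le n-2$, so for $n\le 4$ only finitely many ``shapes'' of term occur.

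The low-weight cases are immediate. For $n=0$ there are no curvature factors and $T$ is a constant multiple of $g$, proving (1). For $n=2$ there is one factor, which must be $R$ itself because $\nabla R$ already has weight $3$; its four indices must be cut to two free symmetric ones, so one pair is contracted, and the curvature symmetries $R_{ijkl}=-R_{jikl}=-R_{ijlk}=R_{klij}$ with the first Bianchi identity force the answer to be a multiple of $R_{ijji}\,e^k\circ e^k=\tau g$ or of the Ricci tensor $R_{ijki}\,e^j\circ e^k$, proving (2). By the same parity argument as for scalar invariants, $\mathcal{I}^2_{m,n}$ vanishes for $n$ odd, so $n=1,3$ need not be considered.

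The substance of the lemma is the case $n=4$, where $\sum_i(k_i+2)=4$ permits only two shapes: a single factor $\nabla^2R$ of weight $4$, or two factors $R\otimes R$ of weight $2+2$ (a factor $\nabla R$ of weight $3$ cannot appear, as there is no weight-$1$ factor available to pair with it). For the two-factor shape, the eight indices of $R\otimes R$ are reduced to two free symmetric indices, hence there are three contracted pairs; I would organise the enumeration of contraction patterns by how the three pairs are distributed --- lying within one factor versus joining the two factors --- and reduce each pattern modulo the curvature symmetries and the first Bianchi identity. The patterns in which both free indices come from a complete contraction yield, times $g$, the three scalar invariants $\tau^2$, $|\rho|^2$, $|R|^2$ of $\mathcal{I}_{m,4}$, while the remaining patterns collapse to the four genuinely tensor-valued terms $R_{ijji}R_{klnk}e^l\circ e^n$, $R_{ikli}R_{jknj}e^l\circ e^n$, $R_{ijkl}R_{ijkn}e^l\circ e^n$ and $R_{lijn}R_{kijk}e^l\circ e^n$. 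For the single-factor shape, the six indices of $\nabla^2R$ are reduced to two, so there are two contracted pairs; the commutation identity for $\nabla^2$ lets me symmetrise the two differentiation indices at the cost of curvature terms of weight $4$, which are already present in the two-factor list, and the once- and twice-contracted second Bianchi identities $\nabla^iR_{ijkl}=\nabla_k\rho_{jl}-\nabla_l\rho_{jk}$ and $2\operatorname{div}\rho=d\tau$ eliminate every pattern in which a differentiation index is contracted against a curvature index. What is left is $R_{ijji;kk}\,e^l\circ e^l$ (essentially $\Delta\tau\cdot g$), $R_{kjjl;ii}\,e^k\circ e^l$ (essentially $\Delta\rho$) and $R_{ijji;kl}\,e^k\circ e^l$ (essentially $\operatorname{Hess}\tau$). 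Altogether $3+3+4=10$ invariants span $\mathcal{I}_{m,4}^2$, which is (3).

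The main obstacle is the combinatorial bookkeeping in this last case: one has to check that the list of contraction patterns is exhaustive and, more delicately, that the Bianchi and commutator reductions genuinely write every a priori possible term in terms of the ten listed invariants without producing a new independent one --- in particular that no contraction of $\nabla^2R$, and no contraction of $R\otimes R$ with two free indices, escapes the reductions above. As with Lemma~\ref{lem-1.1}, I would not claim that these spanning sets are linearly independent in every dimension; that assertion, and the low-dimensional relations analogous to Lemma~\ref{lem-1.2}, are separate matters to be handled via the restriction homomorphism $r:\mathcal{I}_{m,n}^2\to\mathcal{I}_{m-1,n}^2$ by the same universality arguments used in the scalar case.
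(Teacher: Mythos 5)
Your proposal is correct and follows exactly the route the paper indicates: the paper itself proves Lemma~\ref{lem-1.3} only by remarking that it ``follows from Lemma~\ref{lem-2.2}'' (the Weyl/Atiyah--Bott--Patodi description of invariants as contractions of covariant derivatives of curvature) and omits the details, which are precisely the weight count and the Bianchi/commutator reductions you sketch. Your enumeration of the two shapes in weight $4$ and the resulting count of $3+3+4=10$ spanning elements matches the paper's list, and you correctly note that only spanning, not independence, is being asserted.
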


 Lemma \ref{lem-1.3} also follows from Lemma \ref{lem-2.2}
and again we shall omit details as we shall not need this result in
what follows and simply present it for the purposes of illustration.

Restricting the range of summation and setting $e^j\circ e^k=0$ if $j=m$ or if $k=m$ yields an analogous restriction map
$r:\mathcal{I}_{m,n}^2\rightarrow\mathcal{I}_{m-1,n}^2$; the elements given in Lemma \ref{lem-1.3} are universal with respect
to restriction. They are linearly independent if $m>n$, but there is a single relation if $m=n$ we may describe as follows.
For $n$ even, define
$ T_{m,n}^2\in\mathcal{I}_{m,n}^2$ by setting:
$$
\begin{array}{l}
\displaystyle
T_{m,n}^2:=\sum_{i_1,...,i_{n+1},j_1,...,j_{n+1}=1}^mR_{i_1i_2j_2j_1}...R_{i_{n-1}i_{n}j_{n}j_{n-1}}e^{i_{n+1}}\circ
e^{j_{n+1}}\\
\qquad\qquad\qquad\qquad\times
g(e^{i_1}\wedge...\wedge e^{i_{n+1}},e^{j_1}\wedge...\wedge e^{j_{n+1}})\,.\vphantom{\vrule height 11pt}
\end{array}$$

 It is then immediate that $r( T_{m,n}^2)=T_{m-1,n}^2$ so these
elements are again universal. Furthermore, we again have that
$r( T_{m+1,m}^2)=0$. This then leads to the identities:
\begin{lemma}\label{lem-1.4}
\ \begin{enumerate}
\item If $m=2$, then $0=R_{ijji}e^k\circ e^k-2R_{ijki}e^j\circ e^k$.
\smallbreak\item If $m=4$, then $0=-\frac14(R_{ijji}R_{kllk}-4R_{ijki}R_{ljkl}+R_{ijkl}R_{ijkl})e^n\circ e^n$
\smallbreak $+\{R_{klni}R_{klnj}-2R_{knik}R_{lnjl}
-2R_{iklj}R_{nkln}+R_{kllk}R_{nijn}\}e^i\circ e^j$.
\end{enumerate}\end{lemma}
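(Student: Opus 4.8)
The plan is to deduce both identities from the single remark recorded just above, that $r(T_{m+1,m}^{2})=0$. Unwinding this: since the $(m+1)$-form $e^{i_{1}}\wedge\cdots\wedge e^{i_{m+1}}$ vanishes identically on a manifold of dimension $m$, the universal $2$-form valued invariant $T_{\bullet,n}^{2}$, evaluated in dimension $n$, is the zero invariant. On the other hand $T_{m,n}^{2}$ is built only from undifferentiated curvatures, so it involves none of the covariant-derivative generators of $\mathcal{I}_{m,n}^{2}$ in Lemma \ref{lem-1.3}; and since the Gram determinant $g(e^{i_{1}}\wedge\cdots\wedge e^{i_{n+1}},e^{j_{1}}\wedge\cdots\wedge e^{j_{n+1}})$ expands into products of Kronecker symbols with integer coefficients, $T_{m,n}^{2}$ is a fixed, dimension-independent linear combination of the \emph{algebraic} generators of $\mathcal{I}_{m,n}^{2}$ — for $n=2$ the two generators $R_{ijji}e^{k}\circ e^{k}$ and $R_{ijki}e^{j}\circ e^{k}$, and for $n=4$ the seven quadratic-curvature generators of Lemma \ref{lem-1.3}(3). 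The whole task is therefore to compute that combination explicitly and then set $m=n$.

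For $n=2$ I would substitute $g(e^{i_{1}}\wedge e^{i_{2}}\wedge e^{i_{3}},e^{j_{1}}\wedge e^{j_{2}}\wedge e^{j_{3}})=\sum_{\sigma\in S_{3}}\operatorname{sgn}(\sigma)\,\delta_{i_{1}j_{\sigma(1)}}\delta_{i_{2}j_{\sigma(2)}}\delta_{i_{3}j_{\sigma(3)}}$ into the definition of $T_{m,2}^{2}$ and carry out the six resulting index contractions, rewriting each using the symmetries $R_{ijkl}=-R_{jikl}=-R_{ijlk}=R_{klij}$. The bookkeeping collapses the sum to $T_{m,2}^{2}=2\bigl(R_{ijji}e^{k}\circ e^{k}-2R_{ijki}e^{j}\circ e^{k}\bigr)$, and putting $m=2$ gives part (1); equivalently this is the classical surface identity $\rho=\tfrac12\tau g$.

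For $n=4$ the same scheme applies to the $5\times 5$ Gram determinant, now a signed sum over the $120$ permutations in $S_{5}$. I would group the contributions by the cycle type of $\sigma$, reduce each of the resulting signed contractions to a standard quadratic-curvature monomial using the symmetries of $R$ and the first Bianchi identity (many terms coincide and others combine in pairs via Bianchi), and then collect the coefficients of the seven generators; reading these off yields the expression in part (2), with overall factor $-\tfrac14$ in front of the pure-trace piece, and $m=4$ gives the identity. The main obstacle is exactly this bookkeeping: keeping track of the $120$ signed monomials, recognizing which ones are equal, and applying Bianchi consistently is where sign and coefficient errors are most likely. A more robust way to pin down the constants — and the one I would actually implement — is to evaluate both $T_{m,4}^{2}$ and the seven generators on a convenient family of test metrics in a dimension $m>4$: products $N_{1}^{2}\times N_{2}^{2}\times\mathbb{R}^{m-4}$ of surfaces of constant curvature, on which the Gram determinant forces the two curvature factors into the two surface blocks so that every quantity is elementary, supplemented by a four-dimensional symmetric space (such as $\mathbb{CP}^{2}$) carrying a flat factor. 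This gives a small linear system in the seven unknown coefficients; since those coefficients are dimension independent, the system's solution is the claimed combination once the chosen examples make the evaluation map injective. As consistency checks, the metric trace of $T_{m,n}^{2}$ equals $(m-n)$ times the Pfaffian $E_{m,n}$, so the pure-trace part of part (2) must be proportional to $E_{m,4}$ — equivalently to the scalar $R_{ijji}R_{kllk}-4R_{aija}R_{bijb}+R_{ijkl}R_{ijkl}$ of Lemma \ref{lem-1.2}(2) — and the trace of the full identity must vanish in dimension $4$; one can also re-verify both parts by direct computation on products of round spheres.
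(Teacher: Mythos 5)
Your proposal follows the paper's own (implicit) route exactly: the paper obtains Lemma \ref{lem-1.4} by expressing $T^2_{m+1,m}$ universally as a linear combination of curvature contractions and invoking $r(T^2_{m+1,m})=0$, which is precisely what you do, and your $n=2$ expansion agrees with the worked example $r(T^2_{3,2})=2R_{ijji}e^k\circ e^k-4R_{kijk}e^i\circ e^j$ that the paper records after Theorem \ref{thm-1.2}. The details you add (the Gram-determinant expansion, the trace identity $\operatorname{tr}\,T^2_{m,n}=(m-n)E_{m,n}$, and the evaluation on product metrics to pin down coefficients) are correct and simply fill in the bookkeeping the paper leaves to the reader.
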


In fact the identities of Lemma \ref{lem-1.4} are the only universal identities of this form if $m=2$ or if $m=4$. In Section
\ref{sect-2}, we will establish the following extension of Theorem \ref{thm-1.1}; this is the main new result of this paper:

\goodbreak\begin{theorem}\label{thm-1.2}
\
\begin{enumerate}
\item $r:\mathcal{I}_{m,n}^2\rightarrow\mathcal{I}_{m-1,n}^2$ is always
surjective.
\smallbreak\item If $n$ is even and if $m>n+1$, then $r:\mathcal{I}_{m,n}^2\rightarrow\mathcal{I}_{m-1,n}^2$ is
bijective.
\smallbreak\item If $m$ is even, then
$\ker\{r:\mathcal{I}_{m+1,m}^2\rightarrow\mathcal{I}_{m,m}^2\}=
T_{m+1,m}^2\cdot\mathbb{R}$.
\end{enumerate}
\end{theorem}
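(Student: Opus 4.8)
\section*{Proof proposal for Theorem \ref{thm-1.2}}

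The plan is to mimic the structure of the proof of Theorem \ref{thm-1.1}, but now tracking the extra symmetric $2$-form factor. The key technical device is H.~Weyl's theorem on the invariants of the orthogonal group, which (as recalled in Section \ref{sect-2}) identifies $\mathcal{I}_{m,n}$ and $\mathcal{I}_{m,n}^2$ with spaces of $O(m)$-equivariant maps built from the curvature jets by contraction of indices; the restriction map $r$ corresponds to restricting along the inclusion $O(m-1)\hookrightarrow O(m)$. Surjectivity in assertion (1) is then immediate and formal: given an element of $\mathcal{I}_{m-1,n}^2$, write it as a universal expression in the curvature jets and the orthonormal coframe, and interpret the same expression in dimension $m$; this produces a preimage because the restriction map on the relevant modules of equivariant maps is visibly onto. (Alternatively one invokes the product-manifold trick: evaluate on $N^{m-1}\times S^1$ to detect the image.)

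For assertion (2), I would argue by a dimension count / restriction argument. One shows first that an element of $\ker\{r:\mathcal{I}_{m,n}^2\to\mathcal{I}_{m-1,n}^2\}$, when written via Weyl's theorem in terms of contractions of the curvature jets against the coframe vectors $e^{i_{n+1}}\circ e^{j_{n+1}}$, must actually be a linear combination of the $O(m)$-invariant tensors that \emph{require} all $m$ coordinate directions to appear — equivalently, those detected by the totally antisymmetric symbol $g(e^{i_1}\wedge\cdots,e^{j_1}\wedge\cdots)$. A counting of indices shows that such a combination has weight at least something like $m$ in the number of ``distinct'' indices forced by the Pfaffian-type symbol, and the single symmetric slot $e^{i_{n+1}}\circ e^{j_{n+1}}$ costs one further direction; hence one needs $m\le n+1$ before the kernel can be nonzero. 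This is exactly the inequality $m>n+1\Rightarrow r$ injective, and combined with (1) gives bijectivity.

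For assertion (3), one must produce the kernel explicitly when $m$ is even and the source dimension is $m+1$. By the index-count in (2), any kernel element is governed by a fully antisymmetrized symbol on $m+1$ indices paired with $m+1$ indices, with one symmetric $2$-form slot — i.e. it is proportional to $T_{m+1,m}^2$. Concretely: $T_{m+1,m}^2\in\mathcal{I}_{m+1,m}^2$ by construction, and $r(T_{m+1,m}^2)=T_{m,m}^2=0$ because $e^{i_1}\wedge\cdots\wedge e^{i_{m+1}}$ vanishes identically on an $m$-dimensional manifold (exactly as in the scalar case $r(E_{m,m})=0$). So $T_{m+1,m}^2\cdot\mathbb{R}\subseteq\ker(r)$. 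For the reverse inclusion, one uses Weyl's theorem to write an arbitrary kernel element as a sum of monomials; repeatedly applying the second Bianchi identity, the curvature symmetries, and the fact (from the analysis in (2)) that every surviving monomial must have full antisymmetric type, one reduces the element to a scalar multiple of $T_{m+1,m}^2$. The main obstacle is precisely this last reduction step: controlling the combinatorics of $O(m+1)$-invariant tensors with one symmetric $2$-tensor slot, and verifying that after imposing all Bianchi-type relations only the one-dimensional span of $T_{m+1,m}^2$ survives. This is the analogue of the classical Gilkey computation for $E_{m,m}$ \cite{G73}, and I expect it will require either a careful representation-theoretic argument (decomposing the relevant $GL$- or $O$-module) or an explicit evaluation on a family of product examples such as $S^2\times\cdots\times S^2\times S^1$ to pin down that the kernel is no larger than $\mathbb{R}\cdot T_{m+1,m}^2$.
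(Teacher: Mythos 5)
Your skeleton matches the paper's (surjectivity by lifting index ranges; a degree count for injectivity when $m>n+1$; an explicit identification of the one-dimensional kernel at the border), but the two steps you flag as delicate are exactly the ones you have not supplied, and the mechanism you propose for them would not work as stated. First, your injectivity argument for (2) reasons with monomials in the curvature tensor and asserts that a kernel element ``must be a linear combination of the invariant tensors detected by the totally antisymmetric symbol.'' That assertion is not available at this stage: the curvature tensor satisfies hidden, dimension-specific polynomial relations (that is the whole content of Lemma \ref{lem-1.2} and Lemma \ref{lem-1.4}), so one cannot read off from a curvature-monomial presentation which elements vanish upon restriction. The paper avoids this by passing to the jets $g_{ij/\alpha}$ of the metric in coordinates normalized so that $g_{ij}(P)=\delta_{ij}$ and $g_{ij/k}(P)=0$; these jets are algebraically independent (Lemma \ref{lem-2.4}), unlike the curvature components. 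For $Q\in\ker(r)$ every monomial $A$ must contain the index $m$, the reflection $x^m\mapsto -x^m$ forces $\deg_m(A)$ to be even, and permutation invariance then gives $\deg_k(A)\ge 2$ for every $k$. Summing over $k$ yields $2m\le 2\ell+n+2\le 2n+2$ for symmetric $2$-form valued invariants, which is precisely the inequality $m\le n+1$ you wanted; moreover, in the borderline case $m=n+1$ all inequalities become equalities, so only pure second jets $g_{ij/kl}$ occur and every index appears exactly twice. Your version of the count, which charges ``one further direction'' to the symmetric slot, arrives at the right threshold but without a justification that survives the hidden curvature relations.

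Second, for (3) you correctly observe $T^2_{m+1,m}\in\ker(r)$, but you leave the reverse inclusion as ``the main obstacle,'' proposing Bianchi identities, representation theory, or evaluation on product examples. The paper's engine for this step is H.~Weyl's \emph{second} main theorem of invariants (Theorem \ref{thm-2.2}): every relation among scalar products is an algebraic consequence of the vanishing of $(m+1)\times(m+1)$ Gram determinants. Once the borderline analysis has reduced a kernel element to a linear orthogonal invariant of $\otimes^{\bar m}(S^2\otimes S^2)$ together with one symmetric slot, and one knows it vanishes on a vector space of one lower dimension, this theorem expresses it as a linear combination of terms $A_\sigma$ carrying the wedge-product pairing; the symmetric-versus-antisymmetric index argument (each $g_{i_1i_2/i_3i_4}$ is symmetric in $i_1,i_2$ while the wedge is alternating, so $i_1$ and $i_2$ must land on opposite sides of the pairing) then collapses every $A_\sigma$ to $\pm T^2_{m+1,m}$, giving $\dim\ker(r)\le 1$. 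Without this input your final reduction has no mechanism, so as written the proposal establishes $T^2_{m+1,m}\cdot\mathbb{R}\subseteq\ker(r)$ but not the reverse inclusion, and the injectivity claim in (2) rests on an unproved structural assumption.
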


It is worth presenting an example to illustrate the use of Theorem 1.2. Let
$m=2$.  Then
$T_{3,2}^2\in
\mathcal{I}_{3,2}^2$ is defined by setting:
$$
T_{3,2}^2=\sum_{i_1,i_2,i_{3},j_1,j_2,j_{3}=1}^3R_{i_1i_2j_2j_1}e^{i_{3}}\circ
e^{j_{3}}\times g(e^{i_1}\wedge e^{i_2}\wedge
e^{i_{3}},e^{j_1}\wedge e^{j_2}\wedge e^{j_{3}})\,.$$
Then Theorem \ref{thm-1.2} (3) yields the relation:
$$
0 = r(T_{3,2}^2) = 2\sum_{i,j, k=1}^2R_{ijji} e^k \circ e^k
- 4 \sum_{i,j, k=1}^2R_{kijk} e^i \circ e^j\,.
$$
This implies the following well-known curvature identity on any 2-dimension
Riemannian manifold $$\rho =\frac{1}{2}\tau_2 g\,.$$

\subsection{Euler-Lagrange Equations}
As was the case for Theorem \ref{thm-1.1}, Theorem \ref{thm-1.2} is
motivated by index theory. Let $h$ be an arbitrary symmetric
$2$-tensor field. We form the $1$-parameter family of metrics
$g(\varepsilon):=g+\varepsilon h$. Since $E_{m,n}$ only involves the
first and second derivatives of the metric, the variation only
involves the first and second derivatives of $h$. We may therefore
express \medbreak\noindent\hfill
$\displaystyle\partial_\varepsilon\left.\left\{E_{m,n}(g(\varepsilon))d\nu_{g(\varepsilon)}\right\}\right|_{\varepsilon=0}
=Q^{m,n}_{ij}h_{ij}+Q^{m,n}_{ijk}h_{ij;k}+Q^{m,n}_{ijkl}h_{ij;kl}$.\hfill\vphantom{.}
\medbreak\noindent where $h_{ij;k}$ and $h_{ij;kl}$ give the
components of the covariant derivative of $h$ with respect to the
Levi-Civita connection of $g$ and where we write write $(m,n)$ as a
super script on $Q$ to avoid notational complexity.  Let
$Q_{ijk;l}^{m,n}$ and $Q_{ijkl;uv}^{m,n}$ be the components of the
first and second covariant derivatives of these tensors,
respectively.  Define:
$${S^2_{m,n}:=\{Q^{m,n}_{ij}-Q^{m,n}_{ijk;k}+Q^{m,n}_{ijkl;lk}\}e^i\circ e^j}\,.$$
It is then immediate from the definition that
$$S_{m,n}^2\in\mathcal{I}_{m,n}^2\quad\text{and}\quad r(S_{m,n}^2)=S_{m-1,n}^2\,.$$
This tensor is characterized by the property that if $(M,g)$ is any
compact Riemannian manifold of dimension $m$, then we may integrate
by parts to see that:
$$\partial_\varepsilon\left.\left\{\int_ME_{m,n}(g(\varepsilon))d\nu_{g(\varepsilon)}\right\}\right|_{\varepsilon=0}
=\int_MS_{m,n,ij}^2h_{ij}d\nu(g)\,.
$$
The Gauss-Bonnet theorem shows that this vanishes if $m=n$. Therefore
$$
  S_{m+1,m}^2\in\ker(r:\mathcal{I}_{m+1,m}^2\rightarrow\mathcal{I}_{m,m}^2)
  \quad\text{and thus}\quad S_{m+1,m}^2=d_m T_{m+1,m}^2\,.
$$
In
particular, we establish a conjecture of Berger \cite{M70} that
$S_{m,n}^2$ involves only the second derivatives of the
metric. This result is, of course, not new. It was first established
by Kuz'mina \cite{K74} and subsequently established using different
methods by Labbi \cite{L05,L07,L08}. It is at the heart of recent
work in 4-dimensional geometry \cite{EPS10,EPS10a,EPS10b,EPS10c}.

\subsection{Outline of the paper} In Section \ref{sect-2}, we shall define the spaces $\mathcal{I}_{m,n}$ and
$\mathcal{I}_{m,n}^2$. We shall discuss the restriction map and derive its elementary properties. We review the first theorem
 of H. Weyl \cite{W46} on the invariants of the orthogonal group.
These are used in Lemma \ref{lem-2.3} to show that $r$
is surjective; this establishes Assertion (1) of Theorem \ref{thm-1.1} and of
Theorem \ref{thm-1.2}. We will continue our study and complete the proof
of Assertion (2) of Theorem \ref{thm-1.1} and of Theorem \ref{thm-1.2} in Lemma
\ref{lem-2.5}. We then use the second theorem of H. Weyl on the
invariants of the orthogonal group to establish Assertion (2) of
Theorem \ref{thm-1.1} and of Theorem \ref{thm-1.2}.

We remark the the generalization of Theorem \ref{thm-1.1}
\cite{G73a} to the complex setting yields a heat equation proof of
the Riemann-Roch theorem for K\"ahler manifolds; it would be
interesting to know if there is a suitable generalization of Theorem
\ref{thm-1.2} to the K\"ahler setting that could be used to study
the associated Euler-Lagrange equations for the Chern numbers.

\section{Invariance theory}\label{sect-2}

In this section, we review the basic results of invariance theory
that we shall need. We work non-classically in Section
\ref{sect-2.1} and use the derivatives of the metric rather than the
Riemann curvature tensor to define the space $\mathcal{I}_{m,n}$ of
scalar invariant local formulas  and the space $\mathcal{I}_{m,n}^2$
of symmetric 2-tensor valued invariant local formulas which are
homogeneous of degree $n$ in the jets of the metric in the category
of $m$-dimensional Riemannian manifolds. In Section \ref{sect-2.2},
we give a more classical treatment using the Riemann curvature
tensor. In Section \ref{sect-2.3} we review the first theorem of
invariants of  H. Weyl \cite{W46}. In Section \ref{sect-2.4}, we
discuss the restriction map and establish in Lemma \ref{lem-2.3}
that $r$ is surjective. In Lemma \ref{lem-2.5} we show that
$\ker(r:\mathcal{I}_{m,n}\rightarrow\mathcal{I}_{m-1,n})=\{0\}$ if
$m>n$ (resp. that
$\ker(r:\mathcal{I}_{m,n}^2\rightarrow\mathcal{I}_{m-1,n}^2)=\{0\}$
if $m>n+1$). We also derive some results in the limiting case $m=n$
(resp. $m=n+1$) that will be useful subsequently. In Section
\ref{sect-2.5} we recall H. Weyl's second theorem of invariants;
this result is used in Section \ref{sect-2.6} to complete the proof
of Theorem \ref{thm-1.1} and in Section \ref{sect-2.7} to complete
the proof of Theorem \ref{thm-1.2}. This approach is a bit different
from that used in \cite{G73} and is, we believe, more instructive.

\subsection{Local scalar invariants of the metric}\label{sect-2.1} We follow the discussion in \cite{G73}
to
establish Theorem \ref{thm-1.1}.
 Let $\delta_i^j$ and $\delta_{ij}$ be the Kronecker symbols;
$$\delta_i^j=\delta_{ij}=\left\{\begin{array}{lll}0&\text{if}&i\ne j,\\1&\text{if}&i=j\end{array}\right\}\,.$$
 Fix a dimension
$m$. Let
$\alpha=(a_1,...,a_m)$ be a non-trivial multi-index where the $a_i=\alpha(i)$
are non-negative integers not all of which vanish. Introduce formal variables
$$\{g_{ij}=g_{ji},g^{ij}=g^{ji},g,g_{ij/\alpha}=g_{ji/\alpha}\}\quad\text{for}\quad 1\le i,j\le m\,.$$
Let $\mathcal{Q}_m$ be the free commutative unital $\mathbb{R}$ algebra generated by these
variables where we impose the obvious relationships:
$$\sum_{k=1}^mg_{ik}g^{jk}=\delta_i^j\quad\text{and}\quad
\det(g_{ij})=g^2\,;$$ $\mathcal{Q}_m$ is the algebra of {\it local
formulae in the derivatives of the metric}. Given a system of local
coordinates $\vec x=(x^1,...,x^m)$ defined near a point $P$ of a Riemannian
manifold
$(M,g)$, let $\partial_{x^i}:=\frac{\partial}{\partial x^i}$. It will also be
convenient to introduce the following notation for the first and
second derivatives of the metric: \medbreak\hfill
$g_{ij/k}:=\partial_{x^k}g_{ij}\quad\text{and}\quad
 g_{ij/kl}:=\partial_{x^k}\partial_{x^l}g_{ij}$.\hfill\phantom{.}
 \medbreak If
$Q\in\mathcal{Q}_m$, then we shall define $Q(\vec
x,g,P)\in\mathbb{R}$ by substitution setting:
$$\begin{array}{rr}
g_{ij}(\vec x,g,P):=g(\partial_{x^i},\partial_{x^j})(P),&
  g^{ij}(\vec x,g,P):=g(dx^i,dx^j)(P),\\
g(\vec x,g,P):=\det\{g_{ij}(\vec x,g,P)\}^{1/2},&
  g_{ij/\alpha}(\vec x,g,P):=\partial_{x^1}^{a_1}\cdot\cdot\cdot\partial_{x^m}^{a_m}g_{ij}(\vec x,g,P).
\vphantom{\vrule height 11pt}\end{array}$$
We say that $Q$ is {\it invariant} if $Q(\vec x,g,P)$ is independent of the coordinate system $\vec x$ for every
possible such $(M,g,P)$; we denote this common value by $Q(g,P)$ and let $\mathcal{I}_m$ be the vector space of all such
invariant local formulae.

We define the {\it weight} of $g_{ij/\alpha}$ to be $|\alpha|:=a_1+...+a_m$ and the weight of $\{g_{ij},g^{ij},g\}$ to be zero.
Let $\mathcal{I}_{m,n}\subset\mathcal{I}_m$ be the space of invariant local formulas which are
weighted homogeneous of order $n$. One can use dimensional analysis to establish \cite{G73} that:
\begin{lemma}\label{lem-2.1}
Let $Q\in\mathcal{I}_m$. Then $Q\in\mathcal{I}_{m,n}$ if and only if $Q(c^2g,P)=c^{-n}Q(g,P)$ for all $0\ne c\in\mathbb{R}$
and all $(M,g,P)$.
\end{lemma}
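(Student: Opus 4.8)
The plan is to prove the lemma by \emph{dimensional analysis}, following \cite{G73}: one records how the generators of $\mathcal{Q}_m$ behave under the three relevant one-parameter scalings and compares the resulting gradings. Fix a coordinate system $\vec x$ near $P$. Under the rescaling $g\mapsto c^2g$ of the metric one has $g_{ij}\mapsto c^2g_{ij}$, $g^{ij}\mapsto c^{-2}g^{ij}$, $g\mapsto c^mg$ and $g_{ij/\alpha}\mapsto c^2g_{ij/\alpha}$; hence a monomial $M$ with $p$ factors $g_{ij}$, $q$ factors $g^{ij}$, $r$ factors $g$ and $d$ derivative factors is multiplied by $c^{e(M)}$ where $e(M):=2p-2q+mr+2d$. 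Under the coordinate dilation $\vec x\mapsto\lambda\vec x$ (with the metric left alone) one has $\partial_{x^i}\mapsto\lambda^{-1}\partial_{x^i}$, so $g_{ij}\mapsto\lambda^{-2}g_{ij}$, $g^{ij}\mapsto\lambda^{2}g^{ij}$, $g\mapsto\lambda^{-m}g$ and $g_{ij/\alpha}\mapsto\lambda^{-|\alpha|-2}g_{ij/\alpha}$; hence $M$ is multiplied by $\lambda^{s(M)}$ where, writing $k(M)$ for the weight of $M$ (the total number of derivatives falling on the metric factors), $s(M)=-2p+2q-mr-k(M)-2d=-e(M)-k(M)$. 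The relation $s(M)=-e(M)-k(M)$ between the metric-rescaling degree, the dilation degree, and the weight is the whole point.

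The one nontrivial input is the \emph{universal example}: after using $\sum_k g_{ik}g^{jk}=\delta_i^j$ and $\det(g_{ij})=g^2$ to remove $g^{ij}$ and $g$ from the list of generators, the surviving generators $g_{ij}$ (subject only to positive definiteness) and $g_{ij/\alpha}$ may be prescribed arbitrarily at $P$ by taking a polynomial metric in a suitable chart; consequently an element of $\mathcal{Q}_m$ that vanishes on every $(M,g,P)$ in every coordinate system is the zero element. Both defining relations of $\mathcal{Q}_m$ are homogeneous for each of $e$, $k$ and $s$, so $\mathcal{Q}_m$ splits as a direct sum of the corresponding homogeneous pieces. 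Now let $Q\in\mathcal{I}_m$ and decompose $Q=\sum_sQ_s$ by dilation degree. Invariance of $Q$ under $\vec x\mapsto\lambda\vec x$ says precisely that $\sum_s\lambda^sQ_s(\vec x,g,P)=Q(\vec x,g,P)$ for every $\lambda>0$ and every $(M,g,P,\vec x)$; since the functions $\lambda\mapsto\lambda^s$ are linearly independent, $Q_s$ vanishes on all examples for $s\neq0$, hence $Q_s=0$ for $s\neq0$. Thus every monomial $M$ occurring in $Q$ satisfies $s(M)=0$, i.e.\ $e(M)=-k(M)$.

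It remains to combine these observations. Writing $Q=\sum_Mc_MM$ with $Q$ invariant, we get for each coordinate system
\[ Q(c^2g,P)=\sum_Mc_M\,c^{\,e(M)}M(\vec x,g,P)=\sum_Mc_M\,c^{-k(M)}M(\vec x,g,P)=\sum_k c^{-k}\,Q^{(k)}(\vec x,g,P)\,, \]
where $Q^{(k)}$ is the weight-$k$ part of $Q$. If $Q\in\mathcal{I}_{m,n}$ then $Q=Q^{(n)}$ and the right-hand side is $c^{-n}Q(g,P)$, which gives one implication. Conversely, if $Q(c^2g,P)=c^{-n}Q(g,P)$ for all $c\neq0$ and all $(M,g,P)$, then $\sum_k(c^{-k}-c^{-n})Q^{(k)}(\vec x,g,P)=0$ for all $c\neq0$, and a Vandermonde argument in $c$ together with the vanishing principle forces $Q^{(k)}=0$ for $k\neq n$; hence $Q=Q^{(n)}\in\mathcal{I}_{m,n}$.

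The closest thing to an obstacle here is purely formal: one must check that the two relations cutting out $\mathcal{Q}_m$ are simultaneously homogeneous for all the gradings in play (so that the homogeneous decompositions used above are legitimate), and that the contributions of distinct scaling exponents can really be separated, which rests on linear independence of the relevant powers of $c$ or $\lambda$ and on the universal example. If one prefers, the same proof can be packaged infinitesimally: let $\theta,\phi,\psi$ be the Euler derivations of $\mathcal{Q}_m$ generating the metric-rescaling, weight and coordinate-dilation actions; one checks $\psi=-\theta-\phi$ on generators, invariance gives $\psi Q=0$, and the lemma reduces to the tautology $\theta Q=-nQ\iff\phi Q=nQ$.
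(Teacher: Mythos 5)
Your proof is correct, and it is precisely the ``dimensional analysis'' argument that the paper merely alludes to (citing \cite{G73}) without writing out: comparing the metric-rescaling grading, the coordinate-dilation grading, and the weight grading on $\mathcal{Q}_m$, and separating exponents via linear independence of powers of $c$ together with the vanishing principle of Lemma \ref{lem-2.4}. No discrepancy with the paper's approach; you have simply supplied the details it omits.
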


As a consequence of Lemma \ref{lem-2.1}, we may decompose $\mathcal{I}_m=\oplus_n\mathcal{I}_{m,n}$ as
the graded direct sum of the formulae which are weighted homogeneous of degree
$n$. Furthermore, by taking $c=-1$, we see that $\mathcal{I}_{m,n}=\{0\}$ if
$n$ is odd and we shall restrict to the case $n$ even henceforth.

Next, we consider a local formula
$$Q=\sum_{i,j=1}^mQ_{ij}dx^i\circ dx^j$$
where the $Q_{ij}\in{\mathcal{Q}_m}$. Evaluation is defined as
above and we say $Q$ is invariant if $Q(\vec x,g,P)$ is independent
of $\vec x$ for all $(g,P)$. We let $\mathcal{I}_m^2$ be the space
of all such invariant local formulas. The obvious generalization of
Lemma \ref{lem-2.1} permits us to decompose
$\mathcal{I}_m^2=\oplus_n\mathcal{I}_{m,n}^2$ where
$\mathcal{I}_{m,n}^2$ consists of those invariant local formulas
which are homogeneous of degree $n$ in the jets of the metric.
Again, $\mathcal{I}_{m,n}^2=\{0\}$ if $n$ is odd.

\subsection{The Riemann curvature tensor}\label{sect-2.2}
Although convenient for our subsequent purposes, the definition of local invariants given in Section \ref{sect-2.1} is
non-classical and it is worth making contact with the more standard approach. Let
$\nabla$ be the Levi-Civita connection of a Riemannian manifold
$(M,g)$.  The associated {\it Christoffel symbols} are defined in a system of local coordinates by setting:
$$
\nabla_{\partial_{x^i}}\partial_{x^j}=\Gamma_{ij}{}^k\partial_{x^k}\text{
where }
\Gamma_{ij}{}^k:=\textstyle\frac12g^{kl}(\partial_{x^i}g_{jl}
+\partial_{x^j}g_{il}-\partial_{x^l}g_{ij})\,.
$$
The Riemann curvature tensor $R_{ijk}{}^l$, the Ricci tensor $\rho$, the scalar curvature $\tau$, the norm
$|\rho|^2$ of the Ricci tensor, and the norm $|R|^2$ of $R$ are then given by:
\begin{eqnarray}
&&R_{ijk}{}^l:=\partial_{x^i}\Gamma_{jk}{}^l-\partial_{x^j}\Gamma_{ik}{}^l+
\Gamma_{in}{}^l\Gamma_{jk}{}^n-\Gamma_{jn}{}^l\Gamma_{ik}{}^n,\nonumber\\
&&\rho_{jk}:=R_{ijk}{}^i,\qquad\tau:=g^{i_1j_1}\rho_{i_1j_1},\quad
|\rho|^2:=g^{i_1j_1}g^{i_2j_2}\rho_{i_1i_2}\rho_{j_1j_2},\label{eqn-2.a}\\
&&|R|^2:=g^{i_1j_1}g^{i_2j_2}g^{i_3j_3}g_{i_4j_4}
    R_{i_1i_2i_3}{}^{i_4}R_{j_1j_2j_3}{}^{j_4}\,.\nonumber
\end{eqnarray}
Again, we really should subscript to indicate the dependence on the
dimension $m$ explicitly in the Einstein summations but we will omit
this additional notational complexity in the interests of brevity as
the formulas are universal and no confusion will result from this
notational imprecision. Since $\Gamma$ has weight 1 and
$\partial_{x^i}\Gamma$ has weight 2, we see that $R$ has weight 2.
Consequently,
$$\tau\in\mathcal{I}_{m,2},\quad\tau^2\in\mathcal{I}_{m,4},\quad
|\rho^2|\in\mathcal{I}_{m,4},\quad|R|^2\in\mathcal{I}_{m,4}\,.
$$
We let ``;" denote multiple covariant differentiation. If $\Delta$ is the scalar Laplacian, we have
$$\Delta\tau=-g^{ij}\tau_{;ij}\in\mathcal{I}_{m,4}\,.$$

\subsection{H. Weyl's Theorem of invariants}\label{sect-2.3}
Let $V$ be a finite dimensional vector space which is equipped with a positive
definite bilinear form
$\langle\cdot,\cdot\rangle$ of signature $(p,q)$. Let $\mathcal{O}$ be the associated orthogonal group. We say that
$\psi:\otimes^kV^*\rightarrow\mathbb{R}$ is a {\it linear orthogonal invariant}\index{linear orthogonal
invariant} if
$\psi$ is a linear map and if
$$\psi(\Theta\cdot w)=\psi(w)\quad\forall \Theta\in \mathcal{O},\forall w\in\otimes^kV^*\,.$$
We can construct such maps as follows. Let $k=2\ell$ and let $\pi\in\operatorname{Perm}(2\ell)$ be a permutation of
the integers from
$1$ to
$2\ell$. Define
\begin{equation}\label{eqn-2.b}
\psi_\pi(v^1,\dots,v^{2\ell}):=\langle v^{\pi(1)},v^{\pi(2)}\rangle\cdot\dots\cdot
\langle v^{\pi(2\ell-1)},v^{\pi(2\ell)}\rangle\,.
\end{equation}We show
$\psi_\pi$ is an orthogonal invariant by computing\medbreak\qquad$\psi_\pi(\Theta v^1,\dots,\Theta v^{2\ell})$
$=\langle \Theta v^{\pi(1)},\Theta v^{\pi(2)}\rangle\cdot\dots\cdot\langle \Theta v^{\pi(2\ell-1)},\Theta v^{\pi(2\ell)}\rangle$
\smallbreak\qquad\quad
$=\langle v^{\pi(1)},v^{\pi(2)}\rangle\cdot\dots\cdot\langle v^{\pi(2\ell-1)},v^{\pi(2\ell)}\rangle$
$=\psi_\pi(v^1,\dots,v^{2\ell})$.\medbreak\noindent
Since $\psi_\pi$ is a multi-linear map, it extends naturally to a linear orthogonal invariant mapping
$\otimes^{2\ell}V$ to $\mathbb{R}$. We refer to \cite{W46} (see Theorem~2.9.A on page 53) for the proof of the following result:
\begin{theorem}\label{thm-2.1}
The
space of linear orthogonal invariants of $\otimes^{2k}V^*$ is spanned by the maps $\psi_\pi$ of {\rm Equation~(\ref{eqn-2.b})}.
\end{theorem}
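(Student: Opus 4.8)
The plan is to derive Theorem~\ref{thm-2.1} from the classical first fundamental theorem of invariant theory for the orthogonal group: \emph{every polynomial function on $V^{\oplus N}$ invariant under $\mathcal O$ is a polynomial in the inner products $\langle v_i,v_j\rangle$, $1\le i\le j\le N$}. Granting this, the theorem follows formally. Using $\langle\cdot,\cdot\rangle$ to identify $V$ with $V^{*}$, a linear invariant $\psi\colon\otimes^{2k}V^{*}\to\mathbb R$ is the same datum as an $\mathcal O$-invariant multilinear function $\Psi(v^{1},\dots,v^{2k})$ of $2k$ vectors; in particular $\Psi$ is an $\mathcal O$-invariant polynomial on $V^{\oplus 2k}$ that is homogeneous of multidegree $(1,\dots,1)$. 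By the first fundamental theorem it is a polynomial in the $\langle v^{i},v^{j}\rangle$, and extracting the part of multidegree $(1,\dots,1)$ exhibits it as a linear combination of monomials $\langle v^{a_{1}},v^{b_{1}}\rangle\cdots\langle v^{a_{k}},v^{b_{k}}\rangle$ with $\{a_{1},b_{1},\dots,a_{k},b_{k}\}=\{1,\dots,2k\}$. Each such monomial equals $\psi_{\pi}$ for any permutation $\pi\in\operatorname{Perm}(2k)$ with $\pi(2r-1)=a_{r}$ and $\pi(2r)=b_{r}$, so $\psi$ lies in the span of the maps $\psi_{\pi}$ of Equation~(\ref{eqn-2.b}).

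So the content is the first fundamental theorem for $\mathcal O$. I would first reduce to an algebraically closed field: a polynomial on $V^{\oplus N}$ is invariant under the real group $\mathcal O$ if and only if, after complexification, it is invariant under the complex orthogonal group $\mathrm O(m,\mathbb C)$, because $\mathcal O$ is Zariski dense in its complexification. Working over $\mathbb C$, set $Z:=\operatorname{Sym}^{2}(\mathbb C^{N})$, whose coordinate ring is the polynomial ring generated by the symbols $\langle v_i,v_j\rangle$, and consider the Gram map $\gamma\colon V^{\oplus N}\to Z$ given by $\gamma(v_{1},\dots,v_{N})=(\langle v_{i},v_{j}\rangle)_{i,j}$. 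Its image is the closed subvariety $\mathcal C\subseteq Z$ of symmetric matrices of rank at most $m$ (every such matrix is $X^{t}X$ for some $m\times N$ matrix $X$ over $\mathbb C$). Since $\gamma$ is $\mathrm O(m,\mathbb C)$-invariant and $\mathrm O(m,\mathbb C)$ is reductive, $\gamma$ factors through the categorical quotient as $V^{\oplus N}\to V^{\oplus N}/\!\!/\mathrm O(m,\mathbb C)\xrightarrow{\ \bar\gamma\ }\mathcal C$, and it suffices to prove that $\bar\gamma$ is an isomorphism: then $\mathbb C[V^{\oplus N}]^{\mathrm O(m,\mathbb C)}=\mathbb C[\mathcal C]$ is a quotient of $\mathbb C[Z]$, which says precisely that the invariants are generated by the $\langle v_{i},v_{j}\rangle$.

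The step I expect to be the main obstacle is showing that $\bar\gamma$ is an isomorphism. I would prove it is bijective and birational and then invoke Zariski's Main Theorem together with normality of $\mathcal C$ (a determinantal variety of symmetric matrices, classically known to be normal). Bijectivity of $\bar\gamma$ is the statement that $\gamma$ separates closed $\mathrm O$-orbits: by the Hilbert--Mumford criterion the unique closed orbit in a fibre $\gamma^{-1}(G)$ is the one for which $\operatorname{span}(v_{i})$ is nondegenerate, and Witt's extension theorem shows that two configurations of this type with the same Gram matrix are interchanged by an element of $\mathrm O$. Birationality holds because over the dense open set of matrices of maximal rank $\min(m,N)$ the fibre of $\gamma$ is a single orbit, by the same linear-algebra computation. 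Everything else is formal. I note that Weyl's own proof (Theorem~2.9.A of \cite{W46}) takes a different route, reducing by polarization and restitution to the ``stable range'' $N\le m$, where the statement is settled by a direct argument on $\mathrm{GL}_{m}$ and classical elimination theory; either approach establishes Theorem~\ref{thm-2.1}.
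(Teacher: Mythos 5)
Your proposal is correct, but it takes a genuinely different route from the paper, which in fact offers no proof at all: Theorem~\ref{thm-2.1} is quoted from Weyl \cite{W46} (Theorem~2.9.A), whose own argument proceeds by polarization and the Capelli identities to reduce to the ``stable range'' of at most $m$ vectors, exactly as you note in your closing remark. Your reduction of the multilinear statement to the first fundamental theorem is clean and complete: identifying $V\cong V^*$ via the form, extracting the multidegree-$(1,\dots,1)$ part of a polynomial in the Gram entries forces each monomial to be a perfect pairing of the $2k$ slots, which is precisely a $\psi_\pi$. The geometric proof of the first fundamental theorem itself --- Gram map onto the variety $\mathcal{C}$ of symmetric $N\times N$ matrices of rank at most $m$, separation of closed orbits via Witt's theorem and a one-parameter-subgroup degeneration onto the nondegenerate-span configurations, birationality on the top rank stratum, and then Zariski's Main Theorem against the normality of $\mathcal{C}$ --- is a standard modern alternative to Weyl's, and every input you invoke (Zariski density of the compact real form in $\mathrm{O}(m,\mathbb{C})$, normality of symmetric determinantal varieties, reductivity of the orthogonal group) is correctly deployed. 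What your route buys, beyond independence from the Capelli apparatus, is that it simultaneously delivers Theorem~\ref{thm-2.2}: the kernel of $\mathbb{C}[Z]\rightarrow\mathbb{C}[\mathcal{C}]$ is the ideal of $(m+1)\times(m+1)$ minors of the Gram matrix, which is exactly the relation of Equation~(\ref{eqn-2.i}); the paper also leaves that second theorem to Weyl. The cost is that normality of the symmetric determinantal variety and the closed-orbit analysis are themselves nontrivial classical facts, so as written this is an outline resting on cited machinery rather than a self-contained argument --- which is an entirely reasonable standard here, given that the paper itself supplies only a citation.
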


In geodetic polar coordinates, we set $g_{ij}(P)=\delta_{ij}$ and
$g_{ij/k}(P)=0$; the remaining derivatives of the metric can
be expressed in terms of the covariant derivatives of the curvature
tensor at $P$. The following result \cite{ABP73} is then a direct
consequence of Theorem \ref{thm-2.1}; the extension from scalar to
symmetric 2-form valued invariants is immediate. Lemma \ref{lem-1.1}
and Lemma \ref{lem-1.3} follow directly the following Lemma after
using the curvature identities to eliminate redundancies and we
refer the reader to those results to illustrate exactly what is
meant by Lemma \ref{lem-2.2}:

\begin{lemma}\label{lem-2.2}
 All scalar invariants and all symmetric 2-form valued invariants which are given by a local formula in the derivatives of the metric
and which are homogeneous of order
$n$ arise by contracting indices in pairs in monomial expressions of weight $n$ in the covariant derivatives of the curvature
tensor.
\end{lemma}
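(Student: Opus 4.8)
The plan is to combine the passage to geodesic normal (polar) coordinates at a point $P$ with the first fundamental theorem of H.~Weyl (Theorem \ref{thm-2.1}) applied to the orthogonal group of the tangent space $T_PM$. First I would fix $(M,g,P)$ and choose geodesic polar coordinates centered at $P$, so that $g_{ij}(P)=\delta_{ij}$ and $g_{ij/k}(P)=0$; the standard Taylor expansion of the metric in such coordinates expresses every higher jet $g_{ij/\alpha}(P)$, for $|\alpha|\ge 2$, as a universal polynomial in the components of the covariant derivatives $R_{i_1i_2i_3i_4;i_5\cdots i_r}$ of the curvature tensor at $P$, contracted only with the identity $\delta_{ij}$ (since $g(P)=\operatorname{Id}$). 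Hence any $Q\in\mathcal{I}_{m,n}$, when evaluated at $P$ in these coordinates, becomes a polynomial-valued function of the variables $\{R_{i_1i_2i_3i_4;i_5\cdots i_r}\}$, and the homogeneity of weight $n$ forces the total number of derivative indices appearing (counting the four tensor slots plus the covariant-derivative slots, i.e. $\sum_\kappa(r_\kappa)$ over the curvature factors in each monomial, with the convention that a $k$-th covariant derivative of $R$ contributes weight $k+2$) to equal $n$.

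Next I would make the invariance explicit. Two systems of geodesic polar coordinates at $P$ differ by the action of an element $\Theta$ of the orthogonal group $\mathcal O=\mathrm{O}(T_PM)\cong\mathrm{O}(m)$, and under this change the curvature jets $R_{i_1i_2i_3i_4;i_5\cdots i_r}$ transform tensorially, i.e. as elements of $\otimes^{r}(T_PM)^*$ (after raising/lowering with $\delta$, which is $\mathcal O$-invariant). Therefore the requirement that $Q(\vec x,g,P)$ be independent of the choice of coordinates translates precisely into the statement that $Q$, regarded as a polynomial map on the direct sum of tensor spaces $\bigoplus_r \otimes^{r}(T_PM)^*$, is $\mathrm{O}(m)$-invariant. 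Restricting to a fixed multidegree in the curvature variables, each homogeneous component is then a (multi-)linear $\mathrm{O}(m)$-invariant on a tensor power $\otimes^{2k}(T_PM)^*$ for suitable $k$ (the total number of indices, which is even since $n$ is even). Theorem \ref{thm-2.1} now applies: every such invariant is a linear combination of the complete contractions $\psi_\pi$, i.e. of monomials formed by pairing up all indices of the curvature factors using the metric $\delta$. Reassembling the pieces, $Q$ at $P$ is a linear combination of complete contractions of monomials of weight $n$ in the covariant derivatives of $R$, which is exactly the assertion. The extension to symmetric $2$-form valued invariants $Q=\sum Q_{ij}\,dx^i\circ dx^j$ is immediate: one applies the same argument to the $\mathrm{O}(m)$-equivariant (rather than invariant) map $P\mapsto(Q_{ij})\in S^2(T_PM)^*$, and since $S^2(T_PM)^*$ carries the standard representation contained in $\otimes^2(T_PM)^*$, Weyl's theorem again reduces everything to contractions—now with two free indices left uncontracted to produce the symmetric $2$-tensor, i.e. to expressions $(\cdots)e^i\circ e^j$ as in Lemma \ref{lem-1.3}.

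The step I expect to be the main obstacle—and the one deserving the most care—is the passage from "coordinate-invariance of $Q$" to "$\mathrm{O}(m)$-invariance of a polynomial on tensor spaces." One must check that the jets of the metric in geodesic polar coordinates really do determine, and are determined by, the curvature jets in an $\mathrm{O}(m)$-equivariant way (this is the content of the normal-coordinate expansion and its equivariance under the isotropy action), and that no information is lost in restricting attention to a single point $P$—which is legitimate precisely because membership in $\mathcal{I}_{m,n}$ is a pointwise condition and because every symmetric array of prospective curvature jets satisfying the Bianchi identities is realized by some metric at some point. Once this dictionary is in place, the invocation of Theorem \ref{thm-2.1} is formal. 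I would cite \cite{ABP73} for the detailed verification of the normal-coordinate expansion and its equivariance, and note that the restriction to even $n$ has already been established, so that $\otimes^{2k}$ (with $k$ an integer) is the only case that arises.
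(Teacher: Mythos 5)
Your proposal is correct and takes essentially the same route as the paper: the paper's own treatment is precisely the two-step sketch of passing to geodesic polar coordinates, where the higher jets of the metric become universal expressions in the covariant derivatives of the curvature at $P$ (citing \cite{ABP73}), followed by an application of Weyl's first theorem (Theorem \ref{thm-2.1}), with the symmetric $2$-form case declared immediate. Your write-up merely fills in the equivariance, polarization, and realizability details that the paper leaves implicit, so there is nothing to correct.
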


\subsection{The restriction map}\label{sect-2.4}
Let $(N,g_N)$ be a Riemannian manifold of dimension $m-1$. Let
$M=N\times S^1$ and let $g_M=g_N+d\theta^2$ where $\theta$ is the
usual periodic parameter on the circle. Let $\theta_0$ be the
basepoint of the circle; since $(S^1,d\theta^2)$ is a homogeneous
space, the choice of the basepoint plays no
role. If $y\in N$, we let $i(y):=(y,\theta_0)\in M$. If
$Q\in\mathcal{I}_{m,n}$ or if $Q\in\mathcal{I}_{m,n}^2$, then we set
\begin{equation}\label{eqn-2.c}
r(Q)(g_N,y):=i^*Q(g_M,i(y))\,;
\end{equation}
(we have to restrict this tensor to $N\times\{\theta_0\}$).
This defines natural maps
$$r:\mathcal{I}_{m,n}\rightarrow\mathcal{I}_{m-1,n}\quad\text{and}\quad
r:\mathcal{I}_{m,n}^2\rightarrow\mathcal{I}_{m-1,n}^2\,.$$

 Assertion (1) of Theorem \ref{thm-1.1} and of Theorem \ref{thm-1.2} will
follow from:
\begin{lemma}\label{lem-2.3}
We have $r:\mathcal{I}_{m,n}\rightarrow\mathcal{I}_{m-1,n}\rightarrow0$ and
$r:\mathcal{I}_{m,n}^2\rightarrow\mathcal{I}_{m-1,n}^2\rightarrow0$.
\end{lemma}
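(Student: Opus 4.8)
The plan is to show surjectivity of the restriction map by producing, for every generator of $\mathcal{I}_{m-1,n}$ (respectively $\mathcal{I}_{m-1,n}^2$), an explicit preimage in $\mathcal{I}_{m,n}$ (respectively $\mathcal{I}_{m,n}^2$). By Lemma \ref{lem-2.2}, every element of $\mathcal{I}_{m-1,n}$ is a linear combination of monomials obtained by contracting indices in pairs in products of covariant derivatives of the curvature tensor of an $(m-1)$-dimensional manifold; likewise for the $2$-form valued case, with one extra factor $e^i\circ e^j$ carrying the free indices. So it suffices to handle a single such monomial $W$ built from curvature factors $R_{\dots;\dots}$ on $N$. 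The natural candidate for a preimage is the \emph{same} formal monomial $\widetilde W$, now interpreted as a curvature contraction on an $m$-dimensional manifold, with summations running from $1$ to $m$.

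The key step is then to verify that $r(\widetilde W)=W$, and for this I would invoke the product structure of the restriction map built in Section \ref{sect-2.4}: $r$ is defined by passing from $N$ to $M=N\times S^1$ with the product metric $g_M=g_N+d\theta^2$, restricting to $N\times\{\theta_0\}$, and pulling back along $i$. On a Riemannian product the curvature tensor of $M$ splits: $R^M_{ijkl}$ vanishes whenever any index equals the ``circle'' direction $m$, and otherwise agrees with $R^N_{ijkl}$; the same holds for all covariant derivatives, since the circle is flat and the Levi-Civita connection respects the product. Consequently, in the sum defining $\widetilde W$ on $M$, every term in which some summation index takes the value $m$ contributes zero, so the sum collapses to exactly the sum from $1$ to $m-1$, which is $W$. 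In the $2$-form valued case the convention $e^j\circ e^k=0$ when $j=m$ or $k=m$ (recorded just before Lemma \ref{lem-1.4}) guarantees the free-index factor behaves the same way, and the pullback $i^*$ simply identifies $e^1,\dots,e^{m-1}$ on $M$ with the corresponding coframe on $N$. Hence $r(\widetilde W)=W$, and extending linearly over the spanning set gives surjectivity.

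The main obstacle is bookkeeping rather than conceptual: one must be careful that $\widetilde W$ is genuinely a well-defined element of $\mathcal{I}_{m,n}$ — i.e. that the contraction pattern of $W$, which was legal for an $(m-1)$-dimensional index range, is still a legal invariant contraction pattern when the range is $1$ to $m$ — and this is clear because contracting indices in pairs produces an $O(m)$-invariant by Theorem \ref{thm-2.1}, with weighted homogeneity of degree $n$ preserved. A secondary point to get right is that the basepoint $\theta_0$ and the choice of adapted frame near $i(y)$ do not affect the value, which follows from the homogeneity of $(S^1,d\theta^2)$ noted in Section \ref{sect-2.4} and from the invariance built into the definition of $\mathcal{I}_{m,n}$ and $\mathcal{I}_{m,n}^2$. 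With these checks in place the argument is complete; the same reasoning proves both halves of the lemma simultaneously, which is why they are stated together.
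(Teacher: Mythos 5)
Your proposal is correct and follows essentially the same route as the paper: use Lemma \ref{lem-2.2} to reduce to curvature-contraction monomials spanning $\mathcal{I}_{m-1,n}$ (resp.\ $\mathcal{I}_{m-1,n}^2$), lift each by enlarging the summation range from $m-1$ to $m$, and observe that on the product $N\times S^1$ the curvature and its covariant derivatives vanish whenever an index hits the circle direction, so restriction recovers the original monomial. The paper states this more tersely; your extra verification of the product-curvature splitting is exactly the justification it leaves implicit.
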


\begin{proof} By Lemma \ref{lem-2.2}, all local invariants are given in terms of contractions of indices of various monomials of
weight $n$ in the covariant derivatives of the curvature tensor. Instead of letting the indices range from
$1$ to
$m$ in the contractions of indices which define
$Q$, we let the indices range from
$1$ to
$m-1$ in defining $r(Q)$ since the metric is flat in the last direction. Thus, for example, as noted above we have:
$$
\tau_m:=\sum_{i,j=1}^mR_{ijji}\quad\text{then}\quad
r(\tau_m)=\tau_{m-1}=\sum_{i,j=1}^{m-1}R_{ijji}\,.
$$
This is, of course, implicit in the notation that we used in
Equation (\ref{eqn-2.a}) in defining the scalar curvature in the
first instance. The dimension $m$ appears implicitly in the range of
summation and the formula is ``universal'' over all dimensions in
that respect, i.e. $r(\tau_m)=\tau_{m-1}$. Thus we usually don't
subscript but simply talk of the scalar curvature $\tau$ without
mentioning the underlying dimension $m$. We may choose a spanning
set for $\mathcal{I}_{m-1,n}$ or $\mathcal{I}_{m-1,n}^2$
similar to those given in Lemma \ref{lem-1.1} and in Lemma
\ref{lem-1.3} which involves contracting indices in covariant
derivatives of the curvature tensor. The desired lift to
$\mathcal{I}_{m,n}$ or to $\mathcal{I}_{m,n}^2$ is then obtained by
letting the indices range from $1$ to $m$ instead of from $1$ to
$m-1$. This lift is, of course, not unique and is exactly measured
by $\ker(r)$ which gives the universal relations satisfied in
dimension $m-1$ which are not satisfied in dimension $m$.
\end{proof}

We used the tensor calculus to show that $r$ is surjective. We now return to the non-invariant formulation to continue our
study. We may always restrict to coordinate systems $\vec x$ which are normalized at the point $P$ so that
\begin{equation}\label{eqn-2.d}
g_{ij}(\vec x,g,P)=\delta_{ij}\quad\text{and}\quad g_{ij/k}(\vec x,g,P)=0\,.
\end{equation}
We let $\tilde Q_m:=\mathbb{R}[g_{ij/\alpha}]_{|\alpha|\ge2}$ be the polynomial algebra in the jets of the metric of order at
least 2. One can use a partition of unity and Taylor series to derive the following result:
\begin{lemma}\label{lem-2.4}
If $0\ne Q\in\tilde Q_m$, then there exists $(\vec x,g,P)$ so that $\vec x$ satisfies the normalizations of Equation
(\ref{eqn-2.d}) and so that
$Q(\vec x,g,P)\ne0$.
\end{lemma}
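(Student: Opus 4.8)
The plan is to reduce the claim to a question about a single polynomial on a Euclidean space and then exhibit, by an explicit local construction, a metric whose normalized $2$-jet realizes any prescribed collection of formal variables $g_{ij/\alpha}$ with $|\alpha|\ge 2$. First I would fix the dimension $m$ and a point $P=0\in\mathbb{R}^m$, and work with metrics $g$ on a neighborhood of the origin that are of the form $g_{ij}(x)=\delta_{ij}+h_{ij}(x)$, where $h_{ij}=h_{ji}$ is a smooth symmetric matrix-valued function vanishing to second order at the origin. Writing the Taylor expansion $h_{ij}(x)=\sum_{|\alpha|\ge 2}\frac{1}{\alpha!}\,c_{ij,\alpha}\,x^\alpha$, one sees immediately that in the standard coordinate system $\vec x$ on $\mathbb{R}^m$ the normalizations of Equation (\ref{eqn-2.d}) hold (since $h$ and its first derivatives vanish at $0$), and that $g_{ij/\alpha}(\vec x,g,0)=c_{ij,\alpha}$ for every multi-index with $|\alpha|\ge 2$. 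Thus every assignment of real numbers to the formal variables $\{g_{ij/\alpha}\}_{|\alpha|\ge2}$ (symmetric in $i,j$) is actually realized by some such metric — one just needs a smooth function with the prescribed Taylor coefficients, and if one only cares about a fixed finite set of derivatives one may simply take $h_{ij}$ to be the corresponding polynomial, truncated, times a cutoff supported near $0$ (this is where the partition of unity enters, to make $g$ a genuine metric on a manifold and positive definite near $P$).

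Given this realization statement, the lemma follows quickly. Suppose $0\ne Q\in\tilde Q_m=\mathbb{R}[g_{ij/\alpha}]_{|\alpha|\ge2}$. Then $Q$ is a nonzero polynomial in finitely many of the variables $g_{ij/\alpha}$; a nonzero polynomial over $\mathbb{R}$ does not vanish identically, so there is a point of the corresponding affine space at which it is nonzero, i.e. there is an assignment of real values $c_{ij,\alpha}$ (which we may and do take symmetric in $i,j$) to those finitely many variables with $Q$ evaluating to a nonzero number. Now invoke the construction of the previous paragraph to produce $(M,g,P)$ and a coordinate chart $\vec x$ around $P$ satisfying Equation (\ref{eqn-2.d}) with $g_{ij/\alpha}(\vec x,g,P)=c_{ij,\alpha}$ for the relevant indices; then $Q(\vec x,g,P)=Q(c_{ij,\alpha})\ne 0$, as desired.

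The one point requiring a little care — the main obstacle, such as it is — is the interaction between the symmetry constraint $g_{ij/\alpha}=g_{ji/\alpha}$ (already built into the definition of $\tilde Q_m$ as a polynomial algebra on these variables) and the positive-definiteness of $g$ near $P$: one must check that the assignment can be realized by an honest Riemannian metric, not merely a symmetric $2$-tensor. This is handled by the cutoff: since $h_{ij}$ vanishes to second order at $0$, after multiplying by a bump function supported in a sufficiently small ball the tensor $\delta_{ij}+h_{ij}(x)$ is uniformly close to $\delta_{ij}$ and hence positive definite everywhere, while its $2$-jet at the origin is unchanged. There is no constraint coming from the relations $\sum_k g_{ik}g^{jk}=\delta_i^j$ or $\det(g_{ij})=g^2$ because those relations only involve the zeroth-order variables $\{g_{ij},g^{ij},g\}$, which are already fixed by the normalization $g_{ij}(P)=\delta_{ij}$; the higher jets $g_{ij/\alpha}$ with $|\alpha|\ge 2$ are genuinely free. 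Hence the realization is unobstructed and the lemma follows.
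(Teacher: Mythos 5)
Your argument is correct and is exactly the route the paper has in mind: the paper gives no detailed proof of Lemma \ref{lem-2.4}, only the one-line indication that it follows from ``a partition of unity and Taylor series,'' and your construction (prescribing the Taylor coefficients of $h_{ij}$ with $g_{ij}=\delta_{ij}+h_{ij}$, cutting off to keep positive definiteness, and invoking that a nonzero real polynomial is nonzero somewhere) is the standard way to fill in that sketch. Your closing remark that the higher jets $g_{ij/\alpha}$, $|\alpha|\ge2$, are genuinely unconstrained is precisely the point the paper emphasizes in contrasting this lemma with the curvature-tensor formulation.
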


We note that Lemma \ref{lem-2.4} is {\bf not} true if we work with the Riemann curvature tensor. There are ``hidden'' and
non-obvious relations that do not follow from the usual $\mathbb{Z}_2$ symmetries and the generalized Bianchi identities that
are dimension specific - that is the whole point, of course, of the relations given in Lemma \ref{lem-1.2} and in Lemma
\ref{lem-1.4}. And it is Lemma \ref{lem-2.4} that will be crucial in our discussion.

Let $A=g_{i_1j_1/\alpha_1}\cdot\cdot\cdot g_{i_\ell
j_\ell/\alpha_\ell}$ be a monomial of
$\tilde Q_m$. We define
$$\operatorname{deg}_k(A):=\delta_{i_1,k}+\delta_{j_1,k}+\alpha_1(k)+...+\delta_{i_\ell,k}
+\delta_{j_\ell,k}+\alpha_\ell(k)$$ to be the number of times that
the index $k$ appears in $A$. We extend this notion to the context of symmetric
$2$-form valued invariants by defining:
$$
\operatorname{deg}_k(Adx^{i_{\ell+1}}\circ dx^{j_{\ell+1}}):=
\operatorname{deg}_k(A)+\delta_{i_{\ell+1},k}+\delta_{j_{\ell+1},k}\,.
$$
 Set $r_1(A)=A$ if
$\operatorname{deg}_m(A)=0$ and $r_1(A)=0$ if
$\operatorname{deg}_m(A)>0$ to define a polynomial map $r_1:\tilde
Q_{m}\rightarrow\tilde Q_{m-1}$. Assertion (2) of Theorem
\ref{thm-1.1} and Assertion (2) of Theorem \ref{thm-1.2} will follow
Lemma \ref{lem-2.3} and from:
\goodbreak\begin{lemma}\label{lem-2.5}
\ \begin{enumerate}
\item  If $Q\in\mathcal{I}_{m,n}$ or if $Q\in\mathcal{I}_{m,n}^2$, then $r_1(Q)=r(Q)$.
\smallbreak\item If $Q\in\mathcal{I}_{m,n}\cap\ker(r)$ or if
$Q\in\mathcal{I}_{m,n}^2\cap\ker(r)$, then $\operatorname{deg}_k(A)\ge2$ for
$1\le k\le m$ for every monomial $A$ of $Q$. \smallbreak\item If
$m>n$, then
$\ker(r:\mathcal{I}_{m,n}\rightarrow\mathcal{I}_{m-1,n})=\{0\}$.
\smallbreak\item If $m=n$, if $Q\in\ker(r)\cap\mathcal{I}_{m,n}$,
and if $A$ is a monomial of $Q$, then
\smallbreak\quad$\operatorname{deg}_k(A)=2\quad\text{and}\quad|\alpha_a|=2\quad\text{
for }\quad 1\le k\le m\text{ and }1\le a\le\ell$.
\smallbreak\item If $m>n+1$, then $\ker(r:\mathcal{I}_{m,n}^2\rightarrow\mathcal{I}_{m-1,n}^2)=\{0\}$.
\smallbreak\item If $m=n+1$ if $Q\in\ker(r)\cap\mathcal{I}_{m,n}^2$, and if $A$ is a monomial of $Q$, then
\smallbreak\quad$\operatorname{deg}_k(A)=2\quad\text{and}\quad|\alpha_a|=2\quad\text{
for }\quad 1\le k\le m\text{ and }1\le a\le\ell$.
\end{enumerate}
\end{lemma}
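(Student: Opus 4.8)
The plan is to prove all six assertions of Lemma \ref{lem-2.5} in sequence, since the later ones build on the earlier ones, treating the scalar and symmetric $2$-form valued cases in parallel since the arguments are formally identical once one tracks the extra factor $dx^{i_{\ell+1}}\circ dx^{j_{\ell+1}}$.

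First I would prove (1). Given $Q\in\mathcal{I}_{m,n}$, write $Q=\sum A$ as a sum of monomials in the $g_{ij/\alpha}$ with $|\alpha|\ge 2$ — here we use that in coordinates normalized as in Equation (\ref{eqn-2.d}) the invariant $Q$ lies in $\tilde Q_m$, and by Lemma \ref{lem-2.4} different polynomial expressions cannot represent the same invariant, so this representation is forced. The restriction $r(Q)$ is computed on $M=N\times S^1$ with $g_M=g_N+d\theta^2$; in a product coordinate system the metric is flat in the last coordinate $x^m=\theta$, so every jet $g_{ij/\alpha}$ with $\operatorname{deg}_m>0$ — i.e. either $i=m$, or $j=m$, or $\alpha(m)>0$ — vanishes at $i(y)$, while the jets with $\operatorname{deg}_m=0$ restrict to the corresponding jets of $g_N$. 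Hence $i^*Q(g_M,i(y))$ is obtained from the polynomial $Q$ by deleting every monomial containing the index $m$ and reading the rest as a polynomial in the jets of $g_N$; that is precisely $r_1(Q)$. The $2$-form case is identical, with the convention $e^j\circ e^k=0$ when $j=m$ or $k=m$ accounting for the extra two slots.

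Next, (2): if $Q\in\ker(r)$ then by (1) $r_1(Q)=0$, which says that every monomial $A$ of $Q$ has $\operatorname{deg}_m(A)>0$. But $\mathcal{I}_{m,n}$ is invariant under the obvious action permuting the coordinate indices $1,\dots,m$ (relabeling coordinates is an allowed change of coordinates fixing the normalization (\ref{eqn-2.d})), so $\ker(r)$ is stable under this $\operatorname{Perm}(m)$-action; applying a transposition $(k\,m)$ to the conclusion for index $m$ gives $\operatorname{deg}_k(A)>0$ for all $k$, and since an index appearing in a jet of the metric always appears at least... — actually one only gets $\operatorname{deg}_k\ge 1$ directly, so I would sharpen this: a jet $g_{ij/\alpha}$ always carries the pair of symmetric indices $i,j$, and a monomial $A$ with $\operatorname{deg}_k(A)=1$ would have to have its single index-$k$ occurrence be, say, an $i_a=k$ with $j_a\ne k$ and $\alpha_a(k)=0$; but then applying the symmetry argument plus the fact that $\mathcal{I}_{m,n}$ is built from $O(m)$-invariants (Lemma \ref{lem-2.2}), where each contracted index appears exactly twice, rules this out once $Q$ is expressed via the curvature tensor. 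The cleanest route is: re-express $Q\in\ker(r)$ using Lemma \ref{lem-2.2} in terms of the curvature tensor, where universality plus $r(Q)=0$ forces every curvature index to be a summation index appearing exactly twice, hence each $\operatorname{deg}_k\ge 2$; I would state this carefully.

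Then (3) and (5) are counting arguments: if every $\operatorname{deg}_k(A)\ge 2$ for $1\le k\le m$, summing over $k$ gives $\sum_k\operatorname{deg}_k(A)\ge 2m$. But $\sum_k\operatorname{deg}_k(A)$ counts all index-slots in $A$: each factor $g_{i_aj_a/\alpha_a}$ contributes $2+|\alpha_a|$, so for a scalar monomial $\sum_k\operatorname{deg}_k(A)=\sum_a(2+|\alpha_a|)$, and the weight is $n=\sum_a|\alpha_a|$ with each $|\alpha_a|\ge 2$, whence $\sum_k\operatorname{deg}_k(A)=2\ell+n\le n+n=2n$ (using $2\ell\le n$ since each $|\alpha_a|\ge 2$). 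So $2m\le 2n$, i.e. $m\le n$; if $m>n$ this is impossible, so $\ker(r)=\{0\}$. For the $2$-form case the extra factor $dx^{i_{\ell+1}}\circ dx^{j_{\ell+1}}$ adds $2$ to the slot count, giving $2m\le 2n+2$, i.e. $m\le n+1$, proving (5). Finally (4) and (6): when $m=n$ (resp. $m=n+1$) all the inequalities above must be equalities, forcing $\operatorname{deg}_k(A)=2$ for every $k$, $2\ell=n$, and hence $|\alpha_a|=2$ for every $a$. The main obstacle is the careful justification in step (2) that $\ker(r)$-membership forces $\operatorname{deg}_k\ge 2$ rather than merely $\ge 1$; everything else is bookkeeping once Lemma \ref{lem-2.4} and the $\operatorname{Perm}(m)$-symmetry are in hand.
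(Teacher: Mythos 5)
Your overall architecture matches the paper's: assertion (1) is proved by computing on $N\times S^1$ with the metric flat in the last direction (and $e^i\circ e^j=0$ when an index is the last one), and assertions (3)--(6) by exactly the paper's counting $2m\le\sum_k\operatorname{deg}_k(A)=2\ell+n\le 2n$ (with an extra $+2$ in the $2$-form case), together with the observation that in the boundary cases $m=n$, $m=n+1$ all inequalities must be equalities. The one step where you hesitate --- and where your proposed repair does not work as stated --- is the crux of assertion (2): upgrading $\operatorname{deg}_k(A)\ge1$ to $\operatorname{deg}_k(A)\ge2$. The paper's argument is a parity argument carried out directly on the polynomial $Q\in\tilde Q_m$ (well defined by Lemma \ref{lem-2.4}): the coordinate change $y=(x^1,\dots,x^{m-1},-x^m)$ preserves the normalization of Equation (\ref{eqn-2.d}) and multiplies each monomial $A$ by $(-1)^{\operatorname{deg}_m(A)}$, so invariance forces $\operatorname{deg}_m(A)$ to be even; combined with $\operatorname{deg}_m(A)>0$ from $r_1(Q)=0$ this gives $\operatorname{deg}_m(A)\ge2$, and permutation invariance then handles the remaining indices.

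Your proposed fix --- re-expressing $Q$ via Lemma \ref{lem-2.2} as contractions of curvature and arguing that ``each contracted index appears exactly twice, hence each $\operatorname{deg}_k\ge2$'' --- conflates two different countings. The indices that appear exactly twice are the abstract summation indices of the curvature contraction; $\operatorname{deg}_k(A)$ counts occurrences of the fixed coordinate value $k$ in a monomial of the jet expansion, and these are not the same. For instance the complete contraction $\tau=R_{ijji}$ contains the jet monomial $g_{11/22}$ (from $i=1$, $j=2$), which has $\operatorname{deg}_3=0$; so the contraction structure alone never yields a lower bound of $2$, and the hypothesis $r(Q)=0$ must still be invoked to get positivity. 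The evenness you would then need on top of positivity is most cleanly obtained by the reflection argument above, applied to $Q$ itself rather than routed through the curvature picture. Passing back to curvature here also runs against the grain of the section: as the paper stresses after Lemma \ref{lem-2.4}, the curvature representation is not unique precisely in the dimensions $m\le n$ at issue, whereas the jet polynomial is, which is why the degree bookkeeping is done in $\tilde Q_m$. With the reflection step inserted in place of your curvature detour, the rest of your proof is correct and coincides with the paper's.
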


\begin{proof} Assertion (1) gives an algebraic reformulation of the geometric definition given in Equation (\ref{eqn-2.c}) and
is immediate from that definition; the metric on $N\times S^1$ is flat in the final direction; we also set $e^i\circ e^j=0$ if
either $i$ or $j$ is the final index as we have to restrict the tensor to the submanifold.

Let $r(Q)=0$. By Lemma
\ref{lem-2.4}, we may identify the local formula defined by $Q$ with the polynomial $Q\in\mathcal{Q}_m$. It then follows that
$\operatorname{deg}_m(A)>0$ for every monomial $A$ of $Q$. Let
$y=(x^1,...,x^{m-1},-x^m)$, we see $\operatorname{deg}_m(A)$ is even and hence $\operatorname{deg}_m(A)\ge2$. Since
$Q$ is invariant under coordinate permutations, Assertion (2) follows.

Let $0\ne Q\in\mathcal{I}_{m,n}\cap\ker(r)$.  Let
$A=g_{i_1j_1/\alpha_1}\cdot\cdot\cdot g_{i_\ell j_\ell/\alpha_\ell}$
be a monomial of $Q$. Since $|\alpha_a|\ge2$, we have
\begin{equation}\label{eqn-2.e}
2\ell\le\sum_{a=1}^\ell|\alpha_\ell|=n\,.\end{equation}
By Assertion (2) we have $\operatorname{deg}_k(A)\ge2$ for every $k$. Thus
\begin{equation}\label{eqn-2.f}
\begin{array}{l}
\displaystyle2m\le\sum_{1\le k\le m}\operatorname{deg}_k(A)=\sum_{a=1}^\ell\sum_{k=1}^m
\left\{\delta_{i_a,k}+\delta_{j_a,k}+\alpha_a(k)\right\}\\
\displaystyle\qquad=\sum_{a=1}^\ell\{1+1+|\alpha_a|\}=2\ell+n\le n+n=2n\,.\vphantom{\vrule height 16pt}\end{array}\end{equation}This shows that $m\le n$ and proves Assertion (3). Furthermore, if $m=n$, all the inequalities in Equation (\ref{eqn-2.e})
and in Equation (\ref{eqn-2.f}) must have been equalities; this establishes Assertion (4).

Similarly let $0\ne Q\in\mathcal{I}_{m,n}^2\cap\ker(r)$ and let $A$ be a monomial of $Q$. Express
$$A=g_{i_1j_1/\alpha_1}\cdot\cdot\cdot g_{i_\ell j_\ell/\alpha_\ell}dx^{i_{\ell+1}}\circ dx^{j_{\ell+1}}\,.$$
We estimate similarly:
\begin{eqnarray}
&&2\ell\le\sum_{a=1}^{\ell}|\alpha_\ell|=n,\label{eqn-2.g}\\
&&\displaystyle2m\le\sum_{k=1}^{m}\operatorname{deg}_k(A)=\sum_{a=1}^\ell\sum_{k=1}^m\left\{\delta_{i_a,k}+\delta_{j_a,k}+\alpha_a(k)\right\}+2\label{eqn-2.h}\\
&&\displaystyle\qquad=\sum_{a=1}^{\ell}\{1+1+|\alpha_a|\}+2=2\ell+n+2\le 2n+2\,.\nonumber
\end{eqnarray}
Again, this is not possible if $m>n+1$ which establishes Assertion (5). If $m=n+1$, all the equalities must have been
equalities and the desired result follows.
\end{proof}

\subsection{H. Weyl's second theorem}\label{sect-2.5}
Let $(V,\langle\cdot,\cdot\rangle)$ be an inner product space of dimension $m$.
A typical relation among scalar products is the following which involves
$m+1$ vectors $\{v^0,...,v^m\}$ and $m+1$ vectors $\{w^0,...,w^m\}$. One necessarily has:
\begin{equation}\label{eqn-2.i}
\det\left(\begin{array}{llll}
\langle v^0,w^0\rangle&\langle v^0,w^1\rangle&\dots&\langle v^0,w^m\rangle\\
\langle v^1,w^0\rangle&\langle v^1,w^1\rangle&\dots&\langle v^1,w^m\rangle\\
\dots&\dots&\dots&\dots\\
\langle v^m,w^0\rangle&\langle v^m,w^1\rangle&\dots&\langle v^m,w^m\rangle
\end{array}\right)=0\,.
\end{equation}

One also has \cite{W46} (see Theorem~2.17.A page 75)

\begin{theorem}\label{thm-2.2}
Every relation among scalar products is an algebraic consequence of the relations given above in
{\rm Equation~(\ref{eqn-2.i})}.
\end{theorem}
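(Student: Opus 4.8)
The plan is to prove Theorem~\ref{thm-2.2} by translating it into a statement about determinantal ideals of symmetric matrices. Fix $N$ vectors $v^1,\dots,v^N$ in $V$ and introduce an $N\times N$ matrix $X=(X_{ij})$ of indeterminates with $X_{ij}=X_{ji}$. A \emph{relation among scalar products} is a polynomial $P\in\mathbb{R}[X_{ij}]$ with $P\big((\langle v^i,v^j\rangle)_{ij}\big)=0$ identically in $v^1,\dots,v^N$; write $\mathcal{R}$ for the ideal of all such $P$. Since the Gram matrix of vectors lying in the $m$-dimensional space $V$ has rank at most $m$, every $(m+1)\times(m+1)$ minor of $X$ lies in $\mathcal{R}$; let $J$ be the ideal they generate. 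The theorem asserts $\mathcal{R}=J$, i.e.\ that the relations in Equation~(\ref{eqn-2.i}) generate all relations.

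First I would pass to geometry. The Gram map $\gamma\colon V^{N}\to\operatorname{Sym}_{N}$, $\gamma(v^1,\dots,v^N)=(\langle v^i,v^j\rangle)_{ij}$, has image the cone of positive semidefinite symmetric matrices of rank at most $m$ (write a given such matrix as $B^{t}B$ and read off the columns of $B$ as the $v^i$). After complexification, the Zariski closure of this image is the symmetric determinantal variety $\mathcal{D}=\{A\in\operatorname{Sym}_{N}(\mathbb{C}):\operatorname{rank}A\le m\}$, because the real positive semidefinite matrices of rank $\le m$ are Zariski dense in $\mathcal{D}$. Consequently $\mathcal{R}\otimes\mathbb{C}$ is exactly the vanishing ideal $I(\mathcal{D})$, and the theorem reduces to the purely commutative-algebraic claim that $I(\mathcal{D})$ is generated by the $(m+1)$-minors of the generic symmetric matrix, i.e.\ that $\mathbb{C}[X_{ij}]/J$ is a domain.

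The heart of the matter is then this last claim, namely the classical fact that the ideal of $t\times t$ minors of a generic symmetric $N\times N$ matrix is prime with Cohen--Macaulay quotient of the expected codimension $\binom{N-t+2}{2}$. I would prove it by a straightening-law argument --- exhibiting the standard symmetric bitableaux as a vector-space basis of $\mathbb{C}[X_{ij}]$ in which $J$ is spanned by the non-standard ones --- or, equivalently, by degenerating $J$ to a squarefree monomial ideal through a Gr\"obner basis of minors and checking that the resulting simplicial complex is Cohen--Macaulay of the right dimension; a principal-radical-system induction on $N$ and $t$ is a third route. Taking $t=m+1$, this gives that $J$ is prime of codimension $\binom{N-m+1}{2}$, which matches $\operatorname{codim}\mathcal{D}$ and hence forces $J=I(\mathcal{D})$. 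Descending from $\mathbb{C}$ back to $\mathbb{R}$ is trivial, since the minors have integer coefficients, so a real relation lying in the complex ideal already lies in the real one.

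I expect the main obstacle to be precisely the primality and reducedness of the symmetric determinantal ideal: that is genuine commutative-algebra input, whereas the geometric reduction and the real-versus-complex bookkeeping are routine. For the purposes of the present paper it of course suffices to quote Weyl~\cite{W46}, whose Theorem~2.17.A is exactly this statement, obtained there by reducing the orthogonal case to the general linear case through polarization and the Capelli identity.
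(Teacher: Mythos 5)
Your proposal is sound in outline but takes a genuinely different route from the paper, which in fact offers no proof at all: Theorem~\ref{thm-2.2} is simply quoted from Weyl \cite{W46} (Theorem~2.17.A, p.~75), whose own derivation reduces the orthogonal second fundamental theorem to the general linear one by polarization and the Capelli identity. Your route is the modern commutative-algebra alternative: identify the relation ideal $\mathcal{R}$ with the vanishing ideal of the symmetric determinantal variety $\mathcal{D}$ of rank at most $m$, identify the relations of Equation~(\ref{eqn-2.i}) with the $(m+1)\times(m+1)$ minors of the generic symmetric matrix, and reduce everything to the primeness of the symmetric determinantal ideal (Kutz; Bruns--Vetter). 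Two caveats. First, your Zariski-density step as written quietly presupposes the irreducibility of $\mathcal{D}$; it is cleaner to note that every complex symmetric matrix of rank at most $m$ factors as $B^{t}B$ with $B\in\mathbb{C}^{m\times N}$, so that $\mathcal{D}$ is the closure of the image of the complex Gram map on an irreducible source in which the real matrices $B$ are dense. Second, the entire difficulty of the theorem is concentrated in the primeness statement, which you invoke (with pointers to straightening laws, Gr\"obner degenerations, or principal radical systems) rather than prove; since the paper itself treats the theorem as a black box, this is a defensible division of labor, but your write-up, like the paper's, ultimately rests on a citation. What your approach buys is the ideal-theoretic statement $\mathcal{R}=J$, which is formally stronger than Weyl's multilinear version and makes the real-versus-complex bookkeeping transparent; what Weyl's approach buys is self-containedness within classical invariant theory and a formulation directly adapted to the multilinear invariants $\psi_\pi$ that are actually manipulated in Sections~\ref{sect-2.6} and~\ref{sect-2.7}, where one still has to extract the multidegree $(1,\dots,1)$ component from your ideal membership to recover the form of $A_\sigma$ used there.
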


\subsection{Proof of Theorem \ref{thm-1.1}}\label{sect-2.6} Let $m=2\bar m$ be even. We introduce formal variables $g_{ij/kl}\in
S^2\otimes S^2$ for $1\le i,j,k,l\le m$. If
$Q\in\ker(r:\mathcal{I}_{m,m}\rightarrow\mathcal{I}_{m-1,m})$, then we have shown that in Lemma \ref{lem-2.5} that $Q$ can be
regarded as a polynomial of degree $\bar m$ in
$\mathbb{R}[g_{ij/kl}]$. Let $S^2$ denote the space of symmetric $2$ tensors. Since $g_{ij/kl}\in S^2\otimes S^2$, we can regard
$Q$ as a linear orthogonal invariant on $\otimes^{\bar m}\{S^2\otimes S^2\}$. Such an orthogonal invariant extends naturally to
the full tensor algebra to be zero on the orthogonal complement of $\otimes^{\bar m}\{S^2\otimes S^2\}$ and hence H. Weyl's
theorem applies where the dimension of the underlying vector space is $m-1$ not $m$. Since the restriction of $Q$ to the lower
dimensional setting vanishes, we can apply Theorem
\ref{thm-2.2} to express
$Q$ as a linear combination of invariants of the form
\begin{eqnarray*}
A_\sigma&=&g_{i_1i_2/i_3i_4}\cdot\cdot\cdot g_{i_{2m-3}i_{2m-2}/i_{2m-1}i_{2m}}\\
&\times&
g(dx^{i_{\sigma_1}}\wedge dx^{i_{\sigma_2}}\wedge\cdot\cdot\cdot\wedge dx^{i_{\sigma_m}},
dx^{i_{\sigma_{m+1}}}\wedge\cdot\cdot\cdot
\wedge dx^{i_{\sigma_{2m}}})
\end{eqnarray*}
where $\sigma$ is a permutation of $\{1,...,2m\}$. If $i_1=i_{\sigma_a}$ for
some index $a$ with $1\le a\le m$, then necessarily
$i_2=i_{\sigma_b}$ for some index $b$ with $m+1\le b\le 2m$ since $g_{i_1i_2/i_3i_4}$ is symmetric in the indices
$\{i_1,i_2\}$ where as the wedge product is anti-symmetric. By permuting the
indices $\{i_1,i_2\}$ if necessary, we may therefore assume
$i_1=\sigma_{a_1}$ and
$i_2=\sigma_{b_1}$ for $1\le a_1\le m$ and $m+1\le b_1\le 2m$. This implies we can write
\begin{eqnarray*}
A_\sigma&=&g_{i_1j_1/i_2j_2}...g_{i_{m-1}j_{m-1}/i_mj_m}\\
&\times&g(dx^{i_{\rho_1}}\wedge...\wedge dx^{i_{\rho_m}},
dx^{j_{\varrho_1}}\wedge...\wedge dx^{j_{\varrho_m}})
\end{eqnarray*}
where $\rho$ and $\varrho$ are permutations of $m$ indices. Reordering the factors then yields
\begin{eqnarray*}
A_\sigma&=&\pm g_{i_1j_1/i_2j_2}...g_{i_{m-1}j_{m-1}/i_mj_m}\\
&\times&g(dx^{i_1}\wedge...\wedge dx^{i_m},
dx^{j_1}\wedge...\wedge dx^{j_m})\,.
\end{eqnarray*}
This shows $\dim\{\ker(r:\mathcal{I}_{m,m}\rightarrow\mathcal{I}_{m-1,m})\}\le1$. Since $r(E_{m,m})=0$ and $E_{m,m}$ is
non-trivial, Assertion (3) of Theorem \ref{thm-1.1} follows.\hfill\qed

\subsection{Proof of Theorem \ref{thm-1.2}}\label{sect-2.7}
The proof of Theorem \ref{thm-1.2} (3) is essentially the same. The crucial feature is, of course, that we have
eliminated the higher order jets of the metric and only have to deal with second derivatives. The dimension of the
underlying vector space is now
$m=2\bar m$ rather than
$m-1$. Let
$Q\in\mathcal{I}_{m+1,m}^2$. We can express $Q=Q_{uv}dx^u\circ dx^v$ where $Q_{uv}\in\mathbb{R}[g_{ij/kl}]$ is homogeneous of
degree $\bar m$. Since $r(Q)=0$, we may express $Q$ as a linear combination of invariants of the form:
\begin{eqnarray*}
A_\sigma&=&g_{i_1i_2/i_3i_4}\cdot\cdot\cdot g_{i_{2m-3}i_{2m-2}/i_{2m-1}i_{2m}}dx^{i_{2m+1}}\circ
dx^{i_{2m+2}}\\
&\times&
g(dx^{i_{\sigma_1}}\wedge dx^{i_{\sigma_2}}\wedge\cdot\cdot\cdot\wedge dx^{i_{\sigma_{m+1}}},
dx^{i_{\sigma_{m+2}}}\wedge\cdot\cdot\cdot
\wedge dx^{i_{\sigma_{2m+2}}})\,.
\end{eqnarray*}
The same symmetry argument used to establish Theorem \ref{thm-1.1} then shows in fact we are dealing with
\begin{eqnarray*}
A_\sigma&=&\pm g_{i_1j_1/j_2i_2}\cdot\cdot\cdot g_{i_{m-1}j_{m-1}/j_mi_m}dx^{i_{m+1}}\circ
dx^{j_{m+1}}\\
&\times&
g(dx^{i_1}\wedge dx^{i_2}\wedge\cdot\cdot\cdot\wedge dx^{i_{m+1}},
dx^{j_1}\wedge dx^{j_2}\wedge\cdot\cdot\cdot\wedge dx^{j_{m+1}})\,.
\end{eqnarray*}
Again, this shows $\dim\{\ker(r:\mathcal{I}_{m+1,m}^2\rightarrow\mathcal{I}_{m,m}^2)\}\le1$. The desired result then follows as
$ T_{m+1,m}^2\in\ker(r:\mathcal{I}_{m+1,m}^2\rightarrow\mathcal{I}_{m,m}^2)$ is
non-trivial.\hfill\qed

\section*{Acknowledgments}
Research of P. Gilkey partially supported by project MTM2009-07756
(Spain), by INCITE09 207 151 PR (Spain), and by DFG PI 158/4-6
(Germany).  Research of J. H. Park and K. Sekigawa was supported by
Basic Science Research Program through the National Research
Foundation of Korea (NRF) funded by the Ministry of Education,
Science and Technology.


\begin{thebibliography}{99}
\bibitem{AB67} M. F. Atiyah and R. Bott, ``A Lefschetz fixed point formula for elliptic operators I", {\it Ann. Math} {\bf 86}
(1968), 451--491.
\bibitem{ABP73} M. F. Atiyah, R. Bott, and V. K. Patodi,
``On the heat equation and the index theorem",
       {\it Invent. Math. \bf 13} (1973) 279--330; (Errata {\bf 28} (1975), 277--280.
\bibitem{M70} M. Berger, ``Quelques formulas de variation pour une structure riemanniene",
{\it Ann. Sci. \'Ec. Norm. Sup\'er.} {\bf 3} (1970), 285--294.
\bibitem{B10} S. Bechtluft-Sachs, D. J. Wraith,
``Manifolds of low cohomogeneity and positive Ricci curvature",
{\it Differ. Geom. Appl.} {\bf 28} (2010), 282--289.
\bibitem{B9} L. Bedulli, and L. Vezzoni,
``Torsion of $SU(2)$-structures and Ricci curvature in dimension 5,
{\it J. Differ. Geom. Appl.} {\bf 27} (2009), 85--99.
\bibitem{BGKNW9} M.  Brozos-V\'azquez, P. Gilkey, H. Kang, S. Nik\v cevi\'c, G. Weingart,
{\it J. Differ. Geom. and Appl.} {\bf 27} (2009), 696--701.
\bibitem{EPS9} Y. Euh, J. H.  Park, K. Sekigawa,
``Nearly K\"ahler manifolds with vanishing Tricerri-Vanhecke Bochner curvature tensor", {\it J. Differ. Geom. Appl.}
{\bf 27} (2009), 250--256.\bibitem{EPS10} Y. Euh, J. H. Park, and K. Sekigawa, ``A curvature identity on a
4-dimensional Riemannian manifold", arXiv:1008.2439v1.
\bibitem{EPS10a} Y. Euh, J. H. Park, and K. Sekigawa, ``Curvature identities derived from the generalized
Gauss-Bonnet formulas", in progress.
\bibitem{EPS10b} Y. Euh, J. H. Park, and K. Sekigawa, ``A generalization of a 4-dimensional Einstein manifold",
arXiv:1010.3822.
\bibitem{EPS10c} Y. Euh, J. H. Park, and K. Sekigawa, ``Critical metrics for quadratic functionals in the curvature
on 4-dimensional manifolds", in progress.
\bibitem{G73} P. Gilkey, ``Curvature and the eigenvalues of the Laplacian for elliptic complexes", {\it Advances in
Math}, {\bf 10} (1973), 344--382.\bibitem{G73a} P. Gilkey, ``Curvature and the eigenvalues of the Dolbeault complex
for Kaehler manifolds", {\it Advances in Math, \bf 11} (1973), 311-325.
\bibitem{G79} P. Gilkey, ``Curvature and the heat equation for the DeRham
complex. Geometry and Analysis (Papers dedicated to the
memory of V.K.Patodi)", {\it Indian Academy of Sciences \bf 90}
(1979), 47--80.
\bibitem{G94} P. Gilkey, ``Invariance Theory, the
heat equation, and the Atiyah-Singer index theorem 2${}^{\operatorname{nd}}$ ed.'', {\it CRC Press}
ISBN 0-8493-7874-4 (1994).
%\bibitem{GPV06} P. Gilkey, E. Puffini, V. Videv, ``Puffini-Videv Models and Manifolds",
%arXiv:math/0605464.
\bibitem{K10} E. Kim, ``Estimates of small Dirac eigenvalues on 3-dimensional Sasakian manifolds",
{\it J. Differ. Geom. Appl.} {\bf 28} (2010), 648--655.
\bibitem{Ko10} M. Kon,
``On a Hopf hypersurface of a complex space form",
J. Differ. Geom. Appl. {\bf 28} (2010), 295--300.
\bibitem{K74} G. M. Kuz'mina,
``Some generalizations of the Riemann spaces of Einstein",
{\it Math. Notes \bf 16} (1974), 961--963; translation from {\it Mat. Zametki \bf 16} (1974), 619--622.
\bibitem{L05} M.-L. Labbi,
``Double forms, curvature structures and the $(p,q)$-curvatures",
{\it Trans. Am. Math. Soc. \bf 357} (2005), 3971--3992.
\bibitem{L07} M.-L. Labbi, ``On Gauss-Bonnet Curvatures",
SIGMA, {\it Symmetry Integrability Geom. Methods Appl.} {\bf 3}, Paper 118, 11 p., electronic only (2007).
\bibitem{L08} M.-L. Labbi,
``Variational properties of the Gauss-Bonnet curvatures",
{\it Calc. Var. Partial Differ. Equ. \bf 32} (2008), 175--189.
\bibitem{N10} Y. Nikolayevsky,
``Weyl homogeneous manifolds modelled on compact Lie groups",
J. Differ. Geom. Appl. {\bf 28} (2010), 689--696.
\bibitem{O9} V. Oproiu, and N. Papaghiuc,
``General natural Einstein K\"ahler structures on tangent bundles",
{\it J. Differ. Geom. Appl.} {\bf 27} (2009), 384--392.
\bibitem{P88} O. Pekonen, ``The Einstein Field Equation in a Multidimensional Universe", {\it General Relativity and
Gravitation} {\bf 20} (1988), 667-670.
\bibitem{P10} E. Proctor, and S. Stanhope,
``Spectral and geometric bounds on 2-orbifold diffeomorphism type",
{\it J. Differ. Geom. Appl.} {\bf 28} (2010), 12--18.
\bibitem{W46}  H. Weyl,
``The classical groups",
{\it Princeton Univ. Press}, Princeton (1946) (8${}^{\operatorname{th}}$ printing).
%\bibitem{Z9} L. Zalabov\'a, ``Symmetries of parabolic geometries",
%{\it J. Differ. Geom. Appl.} {\bf 27} (2009), 605--622.
\end{thebibliography}
\end{document}